\newtheorem{theorem}{Theorem}[section]
\newtheorem{lemma}[theorem]{Lemma}
\newtheorem{proposition}[theorem]{Proposition}
\newtheorem{corollary}[theorem]{Corollary}
\theoremstyle{definition}
\newtheorem{definition}[theorem]{Definition}
\newtheorem{example}[theorem]{Example}
\newtheorem{algorithm}[theorem]{Algorithm}
\theoremstyle{remark}
\newtheorem{remark}[theorem]{Remark}
\numberwithin{equation}{section}
\begin{document}
	
\title{Numerical semigroups in a problem about economic incentives for consumers}

\author{Aureliano M. Robles-P\'erez\thanks{Both of the authors are supported by FQM-343 (Junta de Andaluc\'{\i}a), MTM2014-55367-P (MINECO, Spain), and FEDER funds. The second author is also partially supported by Junta de Andaluc\'{\i}a/FEDER Grant Number FQM-5849.} \thanks{Departamento de Matem\'atica Aplicada, Universidad de Granada, 18071-Granada, Spain. \newline E-mail: {\bf arobles@ugr.es}}
	\mbox{ and} Jos\'e Carlos Rosales$^*$\thanks{Departamento de \'Algebra, Universidad de Granada, 18071-Granada, Spain. \newline E-mail: {\bf jrosales@ugr.es}} }

\date{ }

\maketitle

\begin{abstract}
Motivated by a promotion to increase the number of musical downloads, we introduce the concept of $C$-incentive and show an algorithm that compute the smallest $C$-incentive containing a subset $X \subseteq {\mathbb N}$. On the other hand, in order to study $C$-incentives, we see that we can focus on numerical $C$-incentives. Then, we establish that the set formed by all numerical $C$-incentives is a Frobenius pseudo-variety and we show an algorithmic process to recurrently build such a pseudo-variety.
\end{abstract}
\noindent {\bf Keywords:} incentives, monoids, numerical semigroups, Frobenius varieties, Frobenius pseudo-varieties.

\medskip

\noindent{\it 2010 Mathematics Subject Classification:} Primary 20M14, 68R10; Secondary 11P99, 11D07.

\section{Introduction}

A certain commercial music streaming service designs a new promotion for one month. Namely, depending on the demand of a song, the cost of the download is 5, 7, 9 or 11 cents. In addition, if the customer waits
\begin{itemize}
\item less than one hour between two downloads, then there is a discount of 3 cents in the second one;
\item more than two hours between two downloads, then there is an additional charge of 2 cents in the second one.
\end{itemize}
For instance, suppose a customer buys a song by 7 cents, then thirty minutes later gets a discount of 3 cents when buying a 9 cent song. Moreover, four hours later, he has an additional charge of 2 cents when purchasing a 5 cent song; and so forth. Our purpose is to study the set ${\mathcal F}$ formed by the amounts that can appear in the customers' invoices at the end of the promotion.

It is clear that we can associate each customer with an odd finite length list $(x_1,x_2,\ldots,x_n)$ such that $x_1,x_3,\ldots,x_n \in \{5,7,9,11\}$, $x_2,x_4,\ldots,x_{n-1} \in \{-3,0,2\}$, and the invoice is $x_1+x_2+\cdots+x_n$. Therefore,
	$${\mathcal F} = \left\{x_1+\cdots+x_n \mid n \mbox{ is an odd positive integer}, x_1,x_3,\ldots,x_n \in \{5,7,9,11\}, \right.$$
	$$\left.  \mbox{ and } x_2,x_4,\ldots,x_{n-1} \in \{-3,0,2\} \right\} \cup \{0\}.$$

The abstraction of the previous example leads us to the following definition: let $A,B$ be two non-empty sets of integers. An \emph{$(A,B)$-sequence} is an odd finite length list $(x_1,x_2,\ldots,x_n)$ such that $x_1,x_3,\ldots,x_n \in A$ and $x_2,x_4,\ldots,x_{n-1} \in B$. Observe that, if we denote by $|(x_1,\ldots,x_n)|=x_1+\cdots+x_n$ and by ${\rm M}(A,B)=\{|x| \mid x \mbox{ is a $(A,B)$-sequence} \} \cup \{0\}$, then ${\mathcal F}={\rm M}(\{5,7,9,11\},\{-3,0,2\})$.

In the remainder of the introduction, we are going to suppose that $A$ is a non-empty finite set of positive integers, that $B$ is a finite set of integers which contain the zero element, and that
$\min(A)+\min(B)\geq 0$. Moreover, as usual, by ${\mathbb Z}$ and ${\mathbb N}$ we denote the set of integers and the set of non-negative integers, respectively.

Firstly, we will show in Section~\ref{first-results} that ${\rm M}(A,B)$ is a submonoid of $({\mathbb N},+)$. We will observe that $A \subseteq {\rm M}(A,B)$ and that, if $b\in B$, then could be that $b\not\in {\rm M}(A,B)$; however, if $x,y \in {\rm M}(A,B) \setminus \{0\}$, then $x+y+b \in {\rm M}(A,B)$. This fact will allow us to give the concept of \emph{$C$-incentive}: if $C$ be a subset of ${\mathbb Z}$, then a $C$-incentive is a submonoid $M$ of $({\mathbb N},+)$ such that $\{x+y\}+C \subseteq M$ for all $x,y \in M\setminus \{0\}$. In Section~\ref{first-results} we will also see that ${\rm M}(A,B)$ is the smallest (with respect to inclusion) $(B \setminus \{0\})$-incentive containing $A$. In this way, following with our example, we have that ${\mathcal F}$ is the smallest $\{-3,2\}$-incentive containing the set $\{5,7,9,11\}$.

From this point to the end of the introduction, we will suppose that $C$ is a non-empty finite set of ${\mathbb Z}$. In Section~\ref{smallest-incentive} we will approach the problem of computing the smallest $C$-incentive which contains a given set $X$ of non-negative integers. In order to see that the above mentioned set exists, we will show the conditions that $X$ has to satisfy with respect to $C$. Once that is done, we will show an algorithm to compute the smallest $C$-incentive in the case that it exists.

A \emph{numerical semigroup} is a submonoid of $({\mathbb N},+)$, $S$, such that $\gcd(S)=1$. From this concept we have the following one: if $M$ is a $C$-incentive, then $M$ is \emph{numerical} in case that $\gcd(M)=1$. We will denote by ${\rm I}(C)=\{M \mid M \mbox{ is a $C$-incentive}\}$ and by ${\rm NI}(C)=\{M \mid M \mbox{ is a numerical $C$-incentive}\}$. In Section~\ref{numerical-incentives} we will show that ${\rm I}(C) \setminus \{0\} = \bigcup_{d \in D} \left\{ dS \mid S \in {\rm NI}\left(\frac{C}{d}\right) \right\}$, where $D$ is the set of all positive divisors of $\gcd(C)$. Observe that this result points out that for studying $C$-incentives we can focus on numerical $C$-incentives.

In \cite{variedades} it was introduced the concept of Frobenius varieties, in order to unify several results which appeared in \cite{patterns, systems, arf, saturated}. Nevertheless, there exist families of numerical semigroups that are not Frobenius varieties. For instance, the family of numerical semigroups with maximal embedding dimension and fixed multiplicity (see \cite{med}). The study, in \cite{bagsvo}, of this class of numerical semigroups led to the concept of $m$-variety. In turn, in \cite{pseudovar} was introduced the concept of Frobenius pseudo-variety which generalizes the concepts of Frobenius variety and $m$-variety.

In Section~\ref{pseudo-variety} we will show that ${\rm NI}(C)$ is a Frobenius pseudo-variety. This fact, together with several results of \cite{pseudovar}, allows us to arrange the elements of ${\rm NI}(C)$ in a tree with root. Then, in Section~\ref{tree} we will show a procedure to recursively build ${\rm NI}(C)$. In order to give it, we will need to describe how are the children of a vertex in the tree associated to ${\rm NI}(C)$.

In the end, in Section~\ref{suggestion} we will study the tree of numerical $C$-incentives containing a given set $X$. In particular, we will determine when that tree is finite and, consequently, we can draw a whole tree.

To finish this introduction, we review some works which have driven us to the study of the $C$-incentives.

A $(v,b,r,k)$-configuration (see \cite{configuration}) is a connected bipartite graph with $v$ vertices on one side, each of them of degree $r$, and $b$ vertices on the other side, each of them of degree $k$, and with no cycle of length 4. A $(v,b,r,k)$-configuration can also be seen as a combinatorial configuration (see \cite{stokes-bras}) with $v$ points, $b$ lines, $r$ lines through every point and $k$ points on every line. It is said that the tuple $(v,b,r,k)$ is configurable if a $(v,b,r,k)$-configuration exists. In \cite{configuration} was shown that, if $(v,b,r,k)$ is configurable, then $vr=bk$ and, consequently, there exists $d$ such that $v=d\frac{k}{\gcd\{r,k\}}$ and $b=d\frac{r}{\gcd\{r,k\}}$. The main result of \cite{configuration} states that, if $k,r$ are integers greater than or equal to 2, then
	$$S_{(r,k)} = \left\{ d \in {\mathbb N} \;\Big\vert \left(d\frac{k}{\gcd\{r,k\}}, d\frac{r}{\gcd\{r,k\}}, r,k\right) \mbox{ is configurable} \right\}$$
is a numerical semigroup. Moreover, in \cite{stokes-bras} was proved that, if a configuration is balanced (that is, $r=k$), then $\left\{x+y-1,x+y+1\right\} \subseteq S_{(r,r)}$, for all $x,y \in S_{(r,r)} \setminus \{0\}$. Therefore, $S_{(r,r)}$ is a numerical $\{-1,1\}$-incentive.

On the other hand, $\{1\}$-incentives, $\{-1\}$-incentives, $\{-1,1\}$-incentives, and $C$-incentives with $C \subseteq {\mathbb N}$ are studied in \cite{frases}, \cite{digitales}, \cite{acotados, benefits}, and \cite{brazaletes}, respectively.

\section{First results}\label{first-results}

In this section, $A$ will denote a non-empty finite set of positive integers, and $B$ will be a finite subset of ${\mathbb Z}$ such that $0 \in B$. Moreover, we will suppose that $\min(A)+\min(B) \geq 0$.

Along this work, an \emph{$(A,B)$-sequence} is an odd finite length list $(x_1,x_2,\ldots,x_n)$ such that $x_1,x_3,\ldots,x_n \in A$ and $x_2,x_4,\ldots,x_{n-1} \in B$. In addition, we denote by $|(x_1,\ldots,x_n)|=x_1+\cdots+x_n$ and by ${\rm M}(A,B)=\{|x| \mid x \mbox{ is a $(A,B)$-sequence} \} \cup \{0\}$. Our first purpose will be to show that ${\rm M}(A,B)$ is a submonoid of $\left({\mathbb N},+\right)$. The following result has an immediate proof.

\begin{lemma}\label{lem1}
If $(x_1,\ldots,x_n)$ is an $(A,B)$-sequence and $n\geq 3$, then $(x_3,\ldots,x_n)$ is an $(A,B)$-sequence.
\end{lemma}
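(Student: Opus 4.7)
The plan is to unpack the definition of an $(A,B)$-sequence and verify that the truncated list $(x_3,\ldots,x_n)$ satisfies each clause. There are essentially three things to check: that the new list has odd length, that its odd-indexed entries belong to $A$, and that its even-indexed entries belong to $B$.

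First I would observe that since $n$ is odd and $n\geq 3$, the length of the sublist $(x_3,\ldots,x_n)$ is $n-2$, which is a positive odd integer. So the parity and nonemptiness condition in the definition of an $(A,B)$-sequence is met.

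Next I would relabel: if we write $(y_1,\ldots,y_{n-2})=(x_3,\ldots,x_n)$, then $y_i=x_{i+2}$ for every admissible $i$. Because adding $2$ preserves parity, $y_i$ has an odd index exactly when $x_{i+2}$ does, and likewise for even indices. Thus $y_1,y_3,\ldots,y_{n-2}=x_3,x_5,\ldots,x_n$ all lie in $A$, and $y_2,y_4,\ldots,y_{n-3}=x_4,x_6,\ldots,x_{n-1}$ all lie in $B$, using the hypothesis that $(x_1,\ldots,x_n)$ itself is an $(A,B)$-sequence. This is exactly what we had to verify.

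There is no real obstacle here; the statement is a direct consequence of the definition once one checks the parity of $n-2$ and the index shift by $2$. The only thing to be mildly careful about is ensuring $n-2\geq 1$, which is guaranteed by the hypothesis $n\geq 3$.
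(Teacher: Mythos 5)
Your argument is correct: the paper omits the proof of this lemma as immediate, and your verification (odd length $n-2\geq 1$, plus the index shift by $2$ preserving parity and hence membership of odd-indexed entries in $A$ and even-indexed entries in $B$) is exactly the routine check that the authors had in mind.
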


\begin{lemma}\label{lem2}
If $(x_1,\ldots,x_n)$ is an $(A,B)$-sequence, then $|(x_1,\ldots,x_n)| \in {\mathbb N} \setminus \{0\}$.
\end{lemma}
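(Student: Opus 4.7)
The plan is to prove this by strong induction on the odd length $n$ of the $(A,B)$-sequence, using Lemma~\ref{lem1} to strip off the first two entries in the inductive step.

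For the base case $n=1$, the sequence is just $(x_1)$ with $x_1 \in A$. Since $A$ is a set of positive integers, $|(x_1)| = x_1 \geq \min(A) > 0$, so $|(x_1)| \in {\mathbb N}\setminus\{0\}$.

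For the inductive step, suppose $n \geq 3$ and the statement holds for all $(A,B)$-sequences of length less than $n$. By Lemma~\ref{lem1}, $(x_3,\ldots,x_n)$ is an $(A,B)$-sequence of length $n-2$, so by the induction hypothesis $|(x_3,\ldots,x_n)| \in {\mathbb N}\setminus\{0\}$, i.e.\ $|(x_3,\ldots,x_n)| \geq 1$. On the other hand, $x_1 \in A$ and $x_2 \in B$, whence $x_1 + x_2 \geq \min(A) + \min(B) \geq 0$ by the standing hypothesis. Decomposing
\[
|(x_1,\ldots,x_n)| = (x_1 + x_2) + |(x_3,\ldots,x_n)| \geq 0 + 1 = 1,
\]
we conclude that $|(x_1,\ldots,x_n)|$ is a positive integer, as required.

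There is no real obstacle here; the only thing to be careful about is that individually $x_2$ may be negative, so one cannot simply observe term by term that entries are non-negative. The hypothesis $\min(A)+\min(B)\geq 0$ is exactly what makes the pair $x_1+x_2$ safely non-negative, and then the strictly positive contribution $|(x_3,\ldots,x_n)|\geq 1$ coming from induction pushes the total into ${\mathbb N}\setminus\{0\}$.
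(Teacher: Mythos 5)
Your proof is correct and follows essentially the same route as the paper's: induction on the odd length $n$, with the base case $x_1\in A\subseteq{\mathbb N}\setminus\{0\}$ and the inductive step combining $x_1+x_2\geq\min(A)+\min(B)\geq 0$ with the positivity of $|(x_3,\ldots,x_n)|$ supplied by Lemma~\ref{lem1} and the induction hypothesis. Your closing remark correctly identifies the one subtlety (that $x_2$ alone may be negative), which the paper handles in exactly the same way.
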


\begin{proof}
Let $m=|(x_1,\ldots,x_n)|$. By induction over $n$, we will see that $m \in {\mathbb N} \setminus \{0\}$. First, if $n=1$, then $m=x_1 \in A \subseteq {\mathbb N} \setminus \{0\}$. Now, let us suppose that $n\geq 3$ (remember that $n$ is odd). Since $x_1\in A$, $x_2 \in B$, and $\min(A)+\min(B)\geq0$, then $x_1+x_2 \in {\mathbb N}$. By hypothesis of induction and Lemma~\ref{lem1}, we have that $|(x_3,\ldots,x_n)| \in {\mathbb N} \setminus \{0\}$. Therefore, $m=|(x_1,\ldots,x_n)|=x_1+x_2+|(x_3,\ldots,x_n)| \in {\mathbb N} \setminus \{0\}$.
\end{proof}

\begin{proposition}\label{prop3}
${\rm M}(A,B)$ is a submonoid of $({\mathbb N},+)$.
\end{proposition}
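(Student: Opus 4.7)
The plan is to verify the three defining properties of a submonoid of $({\mathbb N},+)$: containment in ${\mathbb N}$, containment of the identity $0$, and closure under addition.

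First I would observe that ${\rm M}(A,B)\subseteq{\mathbb N}$ and $0\in{\rm M}(A,B)$ both follow immediately from the definition of ${\rm M}(A,B)$ together with Lemma~\ref{lem2} (which guarantees that $|(x_1,\ldots,x_n)|$ is a positive integer whenever $(x_1,\ldots,x_n)$ is an $(A,B)$-sequence).

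The main content is closure under addition. Take $m_1,m_2\in{\rm M}(A,B)$. If either is zero the sum lies in ${\rm M}(A,B)$ trivially, so assume $m_1=|(x_1,\ldots,x_n)|$ and $m_2=|(y_1,\ldots,y_k)|$ for $(A,B)$-sequences of odd lengths $n$ and $k$. The idea is to concatenate the two sequences using the hypothesis $0\in B$ as a ``connector.'' Consider
\[
(x_1,x_2,\ldots,x_n,\,0,\,y_1,y_2,\ldots,y_k).
\]
Its length $n+1+k$ is odd (odd $+$ odd $+$ odd). A position check shows it is an $(A,B)$-sequence: positions $1,3,\ldots,n$ are occupied by the odd-indexed $x_i\in A$; position $n+1$ is even (since $n$ is odd) and holds $0\in B$; and each $y_j$ sits at position $n+1+j$, which is odd precisely when $j$ is odd, so the odd-indexed $y_j\in A$ land at odd positions and the even-indexed $y_j\in B$ land at even positions. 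Its sum equals $m_1+0+m_2=m_1+m_2$, so $m_1+m_2\in{\rm M}(A,B)$.

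The only mild obstacle is the bookkeeping of indices in the concatenation, to make sure that inserting a single $0$ correctly synchronizes the parity of the $x$-block with that of the $y$-block; this is exactly why the assumption $0\in B$ is being used. Once this is checked, the three submonoid axioms are in place and the proposition follows.
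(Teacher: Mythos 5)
Your proof is correct and follows essentially the same route as the paper: containment in ${\mathbb N}$ via Lemma~\ref{lem2}, and closure under addition by concatenating the two $(A,B)$-sequences with the connector $0\in B$ inserted between them. Your explicit parity check of the positions is just a more detailed version of the paper's one-line observation that the concatenation is an $(A,B)$-sequence.
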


\begin{proof}
By Lemma~\ref{lem2}, we know that ${\rm M}(A,B) \subseteq {\mathbb N}$. Since $0 \in {\rm M}(A,B)$, in order to finish the proof, it is enough to see that ${\rm M}(A,B)$ is closed for the addition. So, let $s,t \in {\rm M}(A,B) \setminus \{0\} $. Then there exist two $(A,B)$-sequences, $(x_1,\ldots,x_n)$ and $(y_1,\ldots,y_n)$, such that $|(x_1,\ldots,x_n)|=s$ and $|(y_1,\ldots,y_n)|=t$. It is obvious that $(x_1,\ldots,x_n,0,y_1,\ldots,y_n)$ is an $(A,B)$-sequence with $|(x_1,\ldots,x_n,0,y_1,\ldots,y_n)| = s+t$. Thus, $s+t \in {\rm M}(A,B)$.  
\end{proof}

Let us observe that $A \subseteq {\rm M}(A,B)$. However, if $b\in B$, then it is possible that $b\notin {\rm M}(A,B)$. Despite this situation, we have the next result.

\begin{lemma}\label{lem4}
If $s,t \in {\rm M}(A,B)\setminus \{0\}$ and $b\in B$, then $s+t+b \in {\rm M}(A,B)$.
\end{lemma}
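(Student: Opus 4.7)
The plan is to mimic the concatenation argument already used in the proof of Proposition~\ref{prop3}, but inserting $b$ in place of $0$ as the ``glue'' between the two sequences.

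First, I would invoke the hypothesis $s,t \in {\rm M}(A,B)\setminus\{0\}$ to obtain two $(A,B)$-sequences $(x_1,\ldots,x_n)$ and $(y_1,\ldots,y_m)$ with $|(x_1,\ldots,x_n)| = s$ and $|(y_1,\ldots,y_m)| = t$ (note that the two sequences may have different odd lengths, but this is not a problem). Then I would form the concatenated list
\[
(x_1,\ldots,x_n,\,b,\,y_1,\ldots,y_m).
\]
Its length is $n+1+m$, which is odd because $n$ and $m$ are odd. The odd-indexed entries are $x_1,x_3,\ldots,x_n$ together with $y_1,y_3,\ldots,y_m$, all of which lie in $A$; the even-indexed entries are $x_2,x_4,\ldots,x_{n-1}$, followed by $b$ in position $n+1$, followed by $y_2,y_4,\ldots,y_{m-1}$, all of which lie in $B$. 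Hence the new list is an $(A,B)$-sequence.

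Finally, its absolute value equals $s + b + t = s + t + b$, and so $s+t+b \in {\rm M}(A,B)$ by definition.

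There is essentially no obstacle here: the only point one has to be careful with is that $b$ is placed in an \emph{even} position of the new sequence, which is exactly the slot reserved for elements of $B$. The condition $\min(A)+\min(B)\geq 0$, needed in Lemma~\ref{lem2} to ensure non-negativity, is not needed again in this argument because we are not reproving membership in ${\mathbb N}$ from scratch; rather, we are showing that a certain integer belongs to ${\rm M}(A,B)$, and Lemma~\ref{lem2} applied to the concatenated sequence automatically guarantees $s+t+b \in {\mathbb N}\setminus\{0\}$.
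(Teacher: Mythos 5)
Your proof is correct and is essentially identical to the paper's: both insert $b$ as the middle (even-position) entry between the two $(A,B)$-sequences representing $s$ and $t$ and observe that the resulting list is again an $(A,B)$-sequence with sum $s+t+b$. Your extra care about the sequences having possibly different odd lengths is a minor tidying of the paper's notation, not a different argument.
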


\begin{proof}
Following the proof of Proposition~\ref{prop3}, let $(x_1,\ldots,x_n)$, $(y_1,\ldots,y_n)$ be two $(A,B)$-sequences such that $|(x_1,\ldots,x_n)|=s$, $|(y_1,\ldots,y_n)|=t$. Obviously, $(x_1,\ldots,x_n,b,y_1,\ldots,y_n)$ is an $(A,B)$-sequence with $|(x_1,\ldots,x_n,b,y_1,\ldots,y_n)|$ $=s+t+b$.
\end{proof}

The previous results lead us to give the following definition.

\begin{definition}\label{def1}
Let $C$ be a subset of ${\mathbb Z}$. We will say that a submonoid $M$ of $({\mathbb N},+)$ is a \emph{$C$-incentive} if it verifies that $\{s+t\} + C \subset M$ for all $s,t \in M \setminus \{0\}$.
\end{definition}

\begin{theorem}\label{thm5}
${\rm M}(A,B)$ is the smallest (with respect to inclusion) $B$-incentive containing $A$.
\end{theorem}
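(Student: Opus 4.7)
The plan is to verify three statements: (i) $\mathrm{M}(A,B)$ is a $B$-incentive; (ii) $A\subseteq\mathrm{M}(A,B)$; (iii) every $B$-incentive $N$ with $A\subseteq N$ satisfies $\mathrm{M}(A,B)\subseteq N$. Together these give the claim.

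For (i) and (ii) I would simply cite what has already been proved. Proposition~\ref{prop3} says $\mathrm{M}(A,B)$ is a submonoid of $(\mathbb{N},+)$, and Lemma~\ref{lem4} says that whenever $s,t\in\mathrm{M}(A,B)\setminus\{0\}$ and $b\in B$, then $s+t+b\in\mathrm{M}(A,B)$; this is exactly the defining condition of a $B$-incentive in Definition~\ref{def1}. Containment (ii) is immediate: for any $a\in A$ the one-element list $(a)$ is an $(A,B)$-sequence with $|(a)|=a$.

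The main content is (iii). Let $N$ be any $B$-incentive with $A\subseteq N$. I would prove, by induction on the (odd) length $n$ of the sequence, that $|(x_1,\ldots,x_n)|\in N$ for every $(A,B)$-sequence. The base case $n=1$ is just $x_1\in A\subseteq N$. For $n\geq 3$, Lemma~\ref{lem1} tells us that $(x_3,\ldots,x_n)$ is again an $(A,B)$-sequence of smaller length, so the inductive hypothesis gives $|(x_3,\ldots,x_n)|\in N$. Writing $|(x_1,\ldots,x_n)|=x_1+x_2+|(x_3,\ldots,x_n)|$ with $x_2\in B$, the $B$-incentive property of $N$ applied to $s=x_1$ and $t=|(x_3,\ldots,x_n)|$ yields the desired membership, provided both $s$ and $t$ are nonzero. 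Since $A$ consists of positive integers we have $x_1\neq 0$, and Lemma~\ref{lem2} supplies $|(x_3,\ldots,x_n)|\in\mathbb{N}\setminus\{0\}$. Adding that $0\in N$ (because $N$ is a monoid) completes $\mathrm{M}(A,B)\subseteq N$.

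The only subtle point, and the one I would be most careful about, is precisely this nonzero verification when invoking Definition~\ref{def1}: the incentive condition is only available for pairs of nonzero elements, so it is essential to have both the positivity of elements of $A$ and Lemma~\ref{lem2} on hand. Once that is noted, the induction is routine and no further machinery is required.
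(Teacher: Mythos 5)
Your proof is correct and follows essentially the same route as the paper: cite Proposition~\ref{prop3} and Lemma~\ref{lem4} for the incentive property, then induct on the length of the $(A,B)$-sequence using Lemma~\ref{lem1} to establish minimality. You are in fact slightly more careful than the paper's own write-up in explicitly invoking Lemma~\ref{lem2} to verify that both arguments of the incentive condition are nonzero.
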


\begin{proof}
By Proposition~\ref{prop3} and Lemma~\ref{lem4}, we have that ${\rm M}(A,B)$ is a $B$-incentive containing $A$. Let $T$ be a $B$-incentive containing $A$. Let us see that ${\rm M}(A,B) \subseteq T$. If $m \in {\rm M}(A,B) \setminus \{0\}$, then there exists an $(A,B)$-sequence $(x_1,\ldots,x_n)$ such that $|(x_1,\ldots,x_k)|=m$. By induction over $k$, we will show that $m \in T$. If $k=1$, then $m=x_1\in A \subset T$. Now, we can suppose that $k \geq 3$. By hypothesis of induction and Lemma~\ref{lem1}, we know that $x_1,|(x_3,\ldots,x_k)| \in T$. Since $T$ is a $B$-incentive and $x_2\in B$, we have that $x_1+|(x_3,\ldots,x_k)|+x_2 \in T$. Therefore, $m=|(x_1,\ldots,x_k)|\in T$.
\end{proof}

The following result is easy to prove.

\begin{proposition}\label{prop6}
Let $C$ be a subset of ${\mathbb Z}$ and let $M$ be a submonoid of $({\mathbb N},+)$. Then $M$ is a $C$-incentive if and only if $M$ is a $(C\setminus \{0\})$-incentive.
\end{proposition}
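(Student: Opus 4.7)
The plan is to show that the two notions of incentive coincide by observing that the potential element $0 \in C$ contributes a vacuous constraint in the defining condition of Definition~\ref{def1}.

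First I would dispose of the trivial case: if $0 \notin C$, then $C \setminus \{0\} = C$ and the equivalence is immediate. So I would assume $0 \in C$ from here on.

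For the forward implication, suppose $M$ is a $C$-incentive. Since $C \setminus \{0\} \subseteq C$, for any $s,t \in M \setminus \{0\}$ we have $\{s+t\} + (C \setminus \{0\}) \subseteq \{s+t\} + C \subseteq M$, so $M$ is a $(C \setminus \{0\})$-incentive.

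For the converse, suppose $M$ is a $(C \setminus \{0\})$-incentive, and let $s,t \in M \setminus \{0\}$. For each $c \in C \setminus \{0\}$ the hypothesis gives $s+t+c \in M$. For $c = 0$, we use that $M$ is a submonoid of $({\mathbb N},+)$ and is therefore closed under addition, so $s+t+0 = s+t \in M$. Combining both, $\{s+t\} + C \subseteq M$, proving that $M$ is a $C$-incentive.

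There is no real obstacle here; the only point requiring any care is noticing that the $c = 0$ contribution to $\{s+t\} + C$ is automatically absorbed by the monoid structure of $M$, which is precisely what makes $0 \in C$ a redundant element in the definition of $C$-incentive.
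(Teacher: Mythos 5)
Your proof is correct and is exactly the argument the paper has in mind (the paper omits the proof as ``easy to prove''): the only content is that the $c=0$ constraint $s+t+0\in M$ is already guaranteed by $M$ being closed under addition, which you identify precisely.
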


As a consequence of the above results, and considering the example of the introduction, we can assert that ${\mathcal F}$ is the smallest $\{-3,2\}$-incentive containing the set $\{5,7,9,11\}$.

In the next section we will study how to compute the smallest $C$-incentive that contains a given set of positive integers. Now, to finish this section, we will give a result that allows us to decide whether or not a submonoid of $({\mathbb N},+)$ is a $C$-incentive.

Let $X$ be a non-empty subset of ${\mathbb N}$. We will denote by $\langle X \rangle$ the submonoid of $({\mathbb N},+)$ generated by $X$, that is,
	$$\langle X \rangle = \left\{ \lambda_1x_1+\cdots+\lambda_nx_n \mid n \in {\mathbb N} \setminus \{0\}, \; x_1,\ldots,x_n \in X, \; \lambda_1,\ldots,\lambda_n \in {\mathbb N} \right\}.$$
If $M=\langle X \rangle$, then we will say that $M$ is generated by $X$ or, equivalently, that $X$ is a \emph{system of generators of} $M$. Moreover, if $M \not= \langle Y \rangle$ for all $Y \subsetneqq X$, then we will say that $X$ is a \emph{minimal system of generators of} $M$. The following result is \cite[Corollary 2.8]{springer}.
	
\begin{lemma}\label{lem7}
Let $M$ be a submonoid of $({\mathbb N},+)$. Then $M$ has a unique minimal system of generators. In addition, such a system is finite.
\end{lemma}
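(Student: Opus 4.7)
The plan is to exhibit the unique minimal generating set explicitly, as the set of ``irreducible'' positive elements of $M$, and then to bound its cardinality. Define
\[
X \;=\; (M \setminus \{0\}) \setminus \bigl( (M\setminus\{0\}) + (M\setminus\{0\}) \bigr),
\]
that is, the set of positive elements of $M$ that cannot be decomposed as a sum of two positive elements of $M$. I expect $X$ to be exactly the minimal system of generators.

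To see that $X$ generates $M$, I would argue by strong induction on $m \in M\setminus\{0\}$: either $m \in X$ and there is nothing to do, or $m = a+b$ with $a,b \in M\setminus\{0\}$, in which case $a,b < m$ and the inductive hypothesis yields $a,b \in \langle X\rangle$. To see that $X$ sits inside every system of generators $Y$, take $x \in X$ and a decomposition $x = \lambda_1 y_1 + \cdots + \lambda_n y_n$ with $y_i \in Y$ and $\lambda_i \in \mathbb{N}$; the indecomposability of $x$ as a sum of two positive elements of $M$ forces this expression to collapse to a single summand $x = y_i$, so $x \in Y$. These two steps together prove that $X$ generates $M$, that it is contained in every system of generators, and hence that it is the unique minimal one.

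For finiteness I would reduce to the numerical-semigroup setting by dividing through by $d = \gcd(M)$ (the case $M = \{0\}$ being trivial). The map $m \mapsto m/d$ is a monoid isomorphism from $M$ onto a numerical semigroup $S$ and sends minimal generators to minimal generators, so it suffices to bound the minimal generators of $S$. Letting $m_0 = \min(S\setminus\{0\})$, the key observation I expect to use is that two distinct minimal generators of $S$ cannot share a residue class modulo $m_0$: if $x < y$ were two such, then $y - x$ would be a positive multiple of $m_0$, hence an element of $S\setminus\{0\}$, making $y = x + (y-x)$ a decomposable element and contradicting the defining property of $X$ applied to $S$. This bounds the number of minimal generators of $S$ by $m_0$. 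I anticipate this residue-class argument to be the only nontrivial point; the rest is a direct unpacking of the definition of $X$.
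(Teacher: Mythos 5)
Your proof is correct. The paper does not actually prove this lemma---it only cites Corollary 2.8 of the Rosales--Garc\'{\i}a-S\'anchez book---but your argument is precisely the standard one behind that citation: the explicit description $X=(M\setminus\{0\})\setminus\bigl((M\setminus\{0\})+(M\setminus\{0\})\bigr)$ is the very formula the paper records immediately after the lemma, and the residue-class bound modulo the smallest positive element is the usual finiteness argument. One minor remark: the reduction by $d=\gcd(M)$ is superfluous, since your residue argument only uses that $M$ contains every positive multiple of $m_0=\min(M\setminus\{0\})$, which already holds for an arbitrary nonzero submonoid of $(\mathbb{N},+)$.
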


If $M$ is a submonoid of $({\mathbb N},+)$, then we will denote by ${\rm msg}(M)$ the minimal system of generators of $M$. It is easy to show (see \cite[Lemma 2.3]{springer}) that ${\rm msg}(M) = (M \setminus \{0\}) \setminus ( (M \setminus \{0\}) + (M \setminus \{0\}) )$.

\begin{proposition}\label{prop8}
Let $C$ be a non-empty subset of ${\mathbb Z}$ and let $M$ be a submonoid of $({\mathbb N},+)$ generated by the set of positive integers $\{n_1,\ldots,n_p\}$. Then $M$ is a $C$-incentive if and only if $\{n_i+n_j\}+C \subset M$ for all $i,j \in \{1,\ldots,p\}$.
\end{proposition}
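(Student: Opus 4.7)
The forward implication is immediate from Definition~\ref{def1}: if $M$ is a $C$-incentive, then $\{s+t\}+C\subset M$ for every $s,t\in M\setminus\{0\}$, and $n_i,n_j\in M\setminus\{0\}$ in particular since each $n_i$ is a positive integer. So the content lies entirely in the reverse direction.

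For the converse, assume $\{n_i+n_j\}+C\subset M$ for all $i,j\in\{1,\ldots,p\}$, and fix arbitrary $s,t\in M\setminus\{0\}$. Since $M=\langle n_1,\ldots,n_p\rangle$, we may write
\[
 s=\lambda_1 n_1+\cdots+\lambda_p n_p,\qquad t=\mu_1 n_1+\cdots+\mu_p n_p,
\]
with $\lambda_k,\mu_k\in\mathbb{N}$. Because $s\ne 0$ and $t\ne 0$, there exist indices $i,j$ with $\lambda_i\geq 1$ and $\mu_j\geq 1$. Set $s':=s-n_i$ and $t':=t-n_j$; both lie in $M$, since $s'=(\lambda_i-1)n_i+\sum_{k\ne i}\lambda_k n_k$ and analogously for $t'$.

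Now, for every $c\in C$, decompose
\[
 s+t+c \;=\; (n_i+n_j+c) + s' + t'.
\]
By hypothesis $n_i+n_j+c\in M$, and $s',t'\in M$; hence $s+t+c\in M$ because $M$ is closed under addition. Since $c\in C$ was arbitrary, $\{s+t\}+C\subset M$, and since $s,t\in M\setminus\{0\}$ were arbitrary, $M$ is a $C$-incentive.

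There is no genuine obstacle here beyond picking generators that actually appear in $s$ and $t$; the essential trick is the identity $s+t+c=(n_i+n_j+c)+(s-n_i)+(t-n_j)$, which reduces the $C$-incentive condition on arbitrary elements of $M\setminus\{0\}$ to the finite check on pairs of minimal generators and exploits that $M$ is a submonoid.
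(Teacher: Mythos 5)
Your proof is correct and follows essentially the same route as the paper: both decompose arbitrary nonzero $s,t\in M$ as $s=n_i+s'$, $t=n_j+t'$ with $s',t'\in M$ and use the identity $s+t+c=(n_i+n_j+c)+s'+t'$ together with closure of $M$ under addition. No issues.
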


\begin{proof}
The necessary condition is trivial. Let us see the sufficient condition. Let $x,y \in M \setminus \{0\}$ and let $c \in C$. By the comment above this proposition, we know that there exist $i,j \in \{1,\ldots,p\}$ and $s,t \in M$ such that $x=n_i+s$ and $y=n_j+t$. Thereby, $x+y+c = (n_i+n_j+c)+s+t \in M$. Therefore, $M$ is a $C$-incentive.
\end{proof}

Let us see an example to illustrate the previous proposition.

\begin{example}\label{exmp9}
We have that $\{3,7,8\} + \{3,7,8\} + \{-3,2\} \subseteq \langle 3,7,8 \rangle$. Consequently, by Proposition~\ref{prop8}, we can assert that $\langle 3,7,8 \rangle$ is a $\{-3,2\}$-incentive.
\end{example}

\section{An algorithm for finding the smallest $C$-incen\-tive containing a given set of positive integers}\label{smallest-incentive}

Let $C$ be a subset of ${\mathbb Z}$. We will say that $X \subseteq {\mathbb N}$ is a \emph{$C$-admissible set} if there exists at least a $C$-incentive containing it. We begin this section by characterizing the $C$-admissible sets. Then, if $X$ is a $C$-admissible set, we will show that there exists the smallest $C$-incentive containing it. Finally, we will give an algorithm to computing it.

First of all, let us observe that sometimes there is not any $C$-incentive containing $X$, such as it is shown in the following example.

\begin{example}\label{exmp10}
Let us see that there does not exist any $\{-4\}$-incentive containing the set $\{3\}$. In fact, by contradiction, let us suppose that $M$ is a $\{-4\}$-incentive containing $\{3\}$. Then $2=3+3-4 \in M$. Therefore, $1=2+3-4 \in M$. Consequently, $-2=1+1+-4 \in M$.
\end{example}

In order to characterize the $C$-admissible sets, we need three lemmas. From now on, we are going to suppose that $C$ is a non-empty finite set of ${\mathbb Z}$ and we denote by $\theta(C)=-\min(C \cup \{0\})$.

\begin{lemma}\label{lem11}
The set $S=\{0,\theta(C),\to\} = \{0\} \cup \{n \in {\mathbb N} \mid n\geq \theta(C)\}$ is a $C$-incentive.
\end{lemma}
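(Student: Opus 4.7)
The plan is very direct because $\theta(C)$ is essentially defined to make this work. First I would record the basic inequality $\theta(C)\geq 0$: since $0\in C\cup\{0\}$, we have $\min(C\cup\{0\})\leq 0$, so $\theta(C)=-\min(C\cup\{0\})\geq 0$. Also, for every $c\in C$, the definition gives $c\geq \min(C\cup\{0\})=-\theta(C)$.

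Next I would check that $S$ is a submonoid of $(\mathbb{N},+)$. Clearly $0\in S\subseteq \mathbb{N}$. Closure under addition: if one summand is $0$ there is nothing to do, while if $s,t\in S\setminus\{0\}$ then $s,t\geq \theta(C)$, so $s+t\geq 2\theta(C)\geq \theta(C)$, hence $s+t\in S$.

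Then I would verify the $C$-incentive condition of Definition~\ref{def1}. Take $s,t\in S\setminus\{0\}$ and $c\in C$. Using $s,t\geq \theta(C)$ and $c\geq -\theta(C)$ we obtain
\[
s+t+c \;\geq\; 2\theta(C) - \theta(C) \;=\; \theta(C) \;\geq\; 0,
\]
so $s+t+c$ is a non-negative integer that is either $0$ or $\geq \theta(C)$; either way $s+t+c\in S$. Thus $\{s+t\}+C\subseteq S$, and $S$ is a $C$-incentive.

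There is no real obstacle here: the quantity $\theta(C)$ is tailored so that a single threshold absorbs every admissible perturbation by elements of $C$. The only minor point to keep in mind is the edge case $\theta(C)=0$, in which $S=\mathbb{N}$ and both verifications are trivial; the inequalities above handle this case uniformly.
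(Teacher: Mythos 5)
Your proof is correct and follows the same route as the paper's: both verify the submonoid property and then use the inequality $s+t+c\geq 2\theta(C)-\theta(C)=\theta(C)$ to conclude $s+t+c\in S$. You simply spell out the preliminary facts ($\theta(C)\geq 0$ and $c\geq -\theta(C)$) that the paper leaves implicit.
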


\begin{proof}
It is clear that $S$ is a submonoid of $({\mathbb N},+)$. Let $a,b \in S\setminus \{0\}$ and $c\in C$. Then $a+b+c \geq \theta(C)$ and, therefore, $a+b+c \in S$. 
\end{proof}

\begin{lemma}\label{lem12}
If $M$ is a $C$-incentive, then $M \subseteq \{0,\theta(C),\to\}$ or $M=\left\langle \frac{\theta(C)}{2}\right\rangle$.
\end{lemma}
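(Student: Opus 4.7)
The plan is to pin down the minimum nonzero element of $M$ and show that it forces one of the two shapes demanded by the statement. I first dispose of two trivial subcases. If $M=\{0\}$, then $M\subseteq\{0,\theta(C),\to\}$ holds vacuously. If $\theta(C)=0$ (equivalently, $C\subseteq{\mathbb N}$), then $\{0,\theta(C),\to\}={\mathbb N}$ and the inclusion is again immediate. So I assume $M\neq\{0\}$ and $\theta(C)>0$; a useful observation in this regime is that $-\theta(C)=\min(C\cup\{0\})<0$, so that the minimum is attained in $C$ itself, i.e., $-\theta(C)\in C$.

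Now let $x=\min(M\setminus\{0\})$. Applying the $C$-incentive property (Definition~\ref{def1}) with $s=t=x$ and $c=-\theta(C)$ produces $2x-\theta(C)\in M\subseteq{\mathbb N}$, so $x\geq\theta(C)/2$. Moreover $2x-\theta(C)$ lies in $M$, and by minimality of $x$ it is either $0$ or at least $x$. Hence exactly one of the following holds: either $2x-\theta(C)=0$, forcing $x=\theta(C)/2$ (in particular $\theta(C)$ is even), or $2x-\theta(C)\geq x$, forcing $x\geq\theta(C)$.

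In the second alternative every nonzero element of $M$ is at least $x\geq\theta(C)$, whence $M\subseteq\{0,\theta(C),\to\}$ directly. In the first alternative I plan to prove $M=\langle x\rangle$. The inclusion $\langle x\rangle\subseteq M$ is trivial. For the reverse I would argue by descent: given $y\in M\setminus\{0\}$, apply the incentive condition to $s=y$, $t=x$, $c=-\theta(C)$ to obtain $y+x-\theta(C)=y-x\in M$. While the running value is strictly positive it is $\geq x$ by minimality, so the step can be iterated, producing $y, y-x, y-2x,\ldots$ all in $M$. Writing the Euclidean division $y=qx+r$ with $0\leq r<x$, after $q$ iterations I reach $r\in M$; minimality of $x$ rules out $0<r<x$, so $r=0$ and $y\in\langle x\rangle$, as desired.

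The only delicate point is this descent: one has to make sure the incentive condition is invoked only on genuinely nonzero elements at each step, so that the descent terminates exactly at $0$ and not at some unwanted positive remainder. Once that bookkeeping is in place, the minimality of $x$ closes the loop and the lemma follows.
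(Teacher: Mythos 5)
Your proof is correct and follows essentially the same route as the paper's: both isolate the least positive element $m$ of $M$, apply the incentive condition with $s=t=m$ and $c=-\theta(C)$ to force either $m\geq\theta(C)$ or $2m-\theta(C)=0$, and in the latter case show $M=\left\langle \frac{\theta(C)}{2}\right\rangle$. The only cosmetic difference is that for the inclusion $M\subseteq\left\langle \frac{\theta(C)}{2}\right\rangle$ the paper takes a minimal counterexample $w\not\equiv 0\pmod{\frac{\theta(C)}{2}}$ and subtracts $\frac{\theta(C)}{2}$ once to get a contradiction, whereas you run the equivalent Euclidean descent on an arbitrary element; your bookkeeping about only invoking the incentive condition on nonzero elements is handled correctly.
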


\begin{proof}
If $\theta(C)=0$, then it is obvious that $M\subseteq \{0,\to\}$. Thereby, we can suppose that $\theta(C)=-c>0$ for some $c \in C$. Let $m$ be the least positive integer belonging to $M$. If $M \not\subseteq \{0,\theta(C),\to\}$, then $m < \theta(C)$. Since $m+m-\theta(C)<m$ and $m+m-\theta(C)=m+m+c \in M$, then $m+m-\theta(C)=0$ and, therefore, $m=\frac{\theta(C)}{2}$. Consequently, $\left\langle \frac{\theta(C)}{2}\right\rangle \subseteq M$. Now, let us see that $M \subseteq \left\langle \frac{\theta(C)}{2}\right\rangle$. If it is not the case, then there exists $w=\min\left\{ x\in M \mid x \not\equiv 0 \pmod{\frac{\theta(C)}{2}} \right\}$. Then, $w-\frac{\theta(C)}{2}=w+\frac{\theta(C)}{2}-\theta(C) = w+m+c \in M$, in contradiction with the definition of $w$. 
\end{proof}

\begin{lemma}\label{lem13}
The monoid $T=\left\langle \frac{\theta(C)}{2}\right\rangle$ is a $C$-incentive if and only if $C \subseteq \left\{k\frac{\theta(C)}{2} \mid  k \in \{-2,-1\} \cup {\mathbb N} \right\}$.
\end{lemma}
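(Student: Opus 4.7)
The plan is to reduce the statement to a one-line check via Proposition~\ref{prop8}, using the fact that $T$ has an especially simple system of generators. Writing $d = \theta(C)/2$, the monoid $T = \langle d \rangle = \{0,d,2d,3d,\ldots\}$ has minimal system of generators $\{d\}$, since $d$ cannot be written as a sum of two positive elements of $T$. So Proposition~\ref{prop8} collapses the $C$-incentive condition on $T$ to a single requirement, namely that $\{2d\} + C \subseteq T$.

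From here both directions are handled simultaneously. An element $c \in \mathbb{Z}$ satisfies $2d + c \in T$ exactly when $2d + c = m d$ for some $m \in \mathbb{N}$, i.e.\ when $c = (m-2)d$ with $m - 2 \geq -2$. This is precisely the statement that $c \in \{kd \mid k \in \{-2,-1\}\cup\mathbb{N}\}$. Running this equivalence over all $c \in C$ gives that $\{2d\}+C \subseteq T$ iff $C \subseteq \{k\theta(C)/2 \mid k \in \{-2,-1\}\cup\mathbb{N}\}$, which together with the previous paragraph proves the lemma.

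I do not anticipate any real obstacle, since Proposition~\ref{prop8} does essentially all the work. The only subtle point to flag is that the very expression $\langle \theta(C)/2 \rangle$ presumes $\theta(C)$ is a positive even integer (this is the context inherited from Lemma~\ref{lem12}); in that regime the sharpest constraint on $c \in C$ comes from taking $s = t = d$ in Definition~\ref{def1}, which is exactly the reduction that Proposition~\ref{prop8} formalizes.
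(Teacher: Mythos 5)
Your proof is correct and follows essentially the same route as the paper: both invoke Proposition~\ref{prop8} to reduce the $C$-incentive condition on $T=\langle \theta(C)/2\rangle$ to the single containment $\{\theta(C)\}+C\subseteq T$, and then unwind $\theta(C)+c=k\,\theta(C)/2$ into the stated condition on $C$. Your remark that the statement implicitly assumes $\theta(C)$ is a positive even integer is a fair observation that the paper leaves tacit.
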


\begin{proof}
By Proposition~\ref{prop8}, $T$ is a $C$-incentive if and only if $\left\{\frac{\theta(C)}{2}+\frac{\theta(C)}{2}\right\}+C \subseteq T$. That is, $T$ is a $C$-incentive if and only if, for every $c \in C$, there exists $k_c \in {\mathbb N}$ such that $\theta(C)+c=k_c\,\frac{\theta(C)}{2}$. From this equality, the conclusion is clear.
\end{proof}

\begin{proposition}\label{prop14}
Let $X$ be a subset of ${\mathbb N}$. Then $X$ is $C$-admissible if and only if $X \subseteq \{0,\theta(C),\to \}$ or $X \subseteq \left\langle \frac{\theta(C)}{2}\right\rangle$ and $C \subseteq  \left\{k\frac{\theta(C)}{2} \mid  k \in \{-2,-1\} \cup {\mathbb N} \right\}$.
\end{proposition}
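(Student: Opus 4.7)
The plan is to derive the proposition by combining Lemmas~\ref{lem11}, \ref{lem12}, and \ref{lem13}, which already do nearly all the work; the proposition is essentially a clean restatement of what those three lemmas together imply.

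For the sufficient condition, I would split according to the two disjuncts. If $X \subseteq \{0, \theta(C), \to\}$, then Lemma~\ref{lem11} produces an explicit $C$-incentive, namely $\{0,\theta(C),\to\}$ itself, which contains $X$, so $X$ is $C$-admissible. If instead $X \subseteq \langle \theta(C)/2 \rangle$ together with the divisibility condition $C \subseteq \{k\theta(C)/2 \mid k \in \{-2,-1\} \cup {\mathbb N}\}$, then Lemma~\ref{lem13} tells us that $\langle \theta(C)/2 \rangle$ is itself a $C$-incentive containing $X$, again yielding $C$-admissibility.

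For the necessary condition, suppose $X$ is $C$-admissible, so there is some $C$-incentive $M$ with $X \subseteq M$. Applying Lemma~\ref{lem12} to $M$, there are exactly two possibilities: either $M \subseteq \{0, \theta(C), \to\}$, in which case $X \subseteq \{0, \theta(C), \to\}$ and the first disjunct holds, or $M = \langle \theta(C)/2 \rangle$, in which case $X \subseteq \langle \theta(C)/2 \rangle$. In this second case, $M$ is by hypothesis a $C$-incentive equal to $\langle \theta(C)/2 \rangle$, so Lemma~\ref{lem13} forces $C \subseteq \{k\theta(C)/2 \mid k \in \{-2,-1\} \cup {\mathbb N}\}$, giving the second disjunct.

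I do not anticipate any real obstacle: the preceding three lemmas have been set up precisely so that this proposition is essentially a two-line deduction in each direction. The only small subtlety is making sure that, in the Lemma~\ref{lem12} dichotomy, the case $\theta(C)=0$ is handled correctly, but there the expression $\theta(C)/2$ is $0$ and the first disjunct $X \subseteq \{0,\to\} = {\mathbb N}$ is trivially available, so nothing extra is needed.
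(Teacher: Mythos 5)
Your proposal is correct and follows exactly the paper's own argument: the paper proves the necessary condition from Lemmas~\ref{lem12} and~\ref{lem13} and the sufficient condition from Lemmas~\ref{lem11} and~\ref{lem13}, which is precisely the decomposition you describe. Your write-up simply spells out the details that the paper's two-line proof leaves implicit.
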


\begin{proof}
From Lemmas~\ref{lem12} and \ref{lem13}, we have the necessary condition. For the sufficient condition, we apply Lemmas~\ref{lem11} and \ref{lem13}.
\end{proof}

The next result has an immediate proof and, therefore, we omit it.

\begin{lemma}\label{lem15}
The intersection of $C$-incentives is a $C$-incentive.
\end{lemma}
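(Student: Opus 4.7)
The plan is to verify directly, from Definition~\ref{def1}, the two conditions defining a $C$-incentive: being a submonoid of $(\mathbb{N},+)$, and satisfying the incentive closure $\{s+t\}+C\subseteq M$ for $s,t$ nonzero elements. So let $\{M_i\}_{i\in I}$ be an arbitrary family of $C$-incentives and set $M=\bigcap_{i\in I}M_i$.

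First I would check that $M$ is a submonoid of $(\mathbb{N},+)$. Since $0\in M_i$ for every $i$, we have $0\in M$, and if $x,y\in M$ then $x,y\in M_i$ for every $i$, so $x+y\in M_i$ for every $i$, giving $x+y\in M$. Inclusion in $\mathbb{N}$ is inherited from any single $M_i$. This is the completely standard fact that submonoids are closed under arbitrary intersections.

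Next I would verify the incentive condition. Take $s,t\in M\setminus\{0\}$ and $c\in C$. Because $s$ and $t$ are nonzero and lie in $M\subseteq M_i$ for every $i$, they belong to $M_i\setminus\{0\}$ for every $i$. Applying the hypothesis that each $M_i$ is a $C$-incentive yields $s+t+c\in M_i$ for every $i\in I$, hence $s+t+c\in M$. Since $c\in C$ was arbitrary, $\{s+t\}+C\subseteq M$, which is exactly the defining property.

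There is no real obstacle here; the argument is entirely formal and relies only on the fact that the conditions defining a $C$-incentive are universally quantified (so they pass to arbitrary intersections). This is presumably why the authors state that the proof is immediate and omit it.
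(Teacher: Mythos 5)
Your proof is correct and is exactly the routine verification the paper has in mind when it omits the proof as immediate: intersections of submonoids are submonoids, and the universally quantified incentive condition passes to arbitrary (nonempty) intersections. Nothing further is needed.
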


This lemma leads us to the following definition.

\begin{definition}
Let $X$ be a $C$-admissible set. Let ${\rm L}_C(X)$ be the intersection of all $C$-incentives containing $X$. We say that ${\rm L}_C(X)$ is the \emph{$C$-incentive generated by} $X$.
\end{definition}

As a consequence of Lemma~\ref{lem15}, we have that ${\rm L}_C(X)$ is the smallest (with respect the inclusion) $C$-incentive containing $X$.

Let us denote by ${\rm I}(C) = \{ M \mid M \mbox{ is a $C$-incentive} \}$. 

\begin{theorem}\label{thm16}
With the above notation we have the following.
\begin{enumerate}
\item If $\,C \not\subseteq \left\{k\frac{\theta(C)}{2} \mid  k \in \{-2,-1\} \cup {\mathbb N} \right\}$, then $${\rm I}(C) = \left\{ {\rm L}_C(X) \mid X \mbox{ is a finite subset of }\, \{0,\theta(C),\to \} \right\}.$$
\item If $\,C \subseteq \left\{k\frac{\theta(C)}{2} \mid  k \in \{-2,-1\} \cup {\mathbb N} \right\}$, then $${\rm I}(C) = \left\{ {\rm L}_C(X) \mid X \mbox{ is a finite subset of }\, \{0,\theta(C),\to \} \right\} \cup  \mbox{$\left\langle \frac{\theta(C)}{2} \right\rangle$}.$$
\end{enumerate}
\end{theorem}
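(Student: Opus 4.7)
The plan is to use Lemma~\ref{lem12} as the structural dichotomy and reduce every $C$-incentive to one generated (in the sense of ${\rm L}_C$) by a finite set, so that each case of the theorem becomes a transparent combination of the preceding lemmas.

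The first step I would carry out is the observation that every $C$-incentive $M$ satisfies $M = {\rm L}_C({\rm msg}(M))$. By Lemma~\ref{lem7}, the set ${\rm msg}(M)$ is finite. Since $M$ is itself a $C$-incentive containing ${\rm msg}(M)$, we get ${\rm L}_C({\rm msg}(M)) \subseteq M$; conversely, any $C$-incentive containing ${\rm msg}(M)$ must contain $\langle {\rm msg}(M) \rangle = M$, giving the reverse inclusion. This reduces the classification to describing what ${\rm msg}(M)$ can look like.

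At this point I would invoke Lemma~\ref{lem12} to split into two alternatives. If $M \subseteq \{0,\theta(C),\to\}$, then ${\rm msg}(M)$ is a finite subset of $\{0,\theta(C),\to\}$ and $M = {\rm L}_C({\rm msg}(M))$ lies in the first term of the proposed description. Otherwise $M = \langle \theta(C)/2\rangle$, which is the exceptional case. In case~(1) of the theorem, Lemma~\ref{lem13} rules out this exception, so ${\rm I}(C)$ consists only of the ${\rm L}_C(X)$ with $X$ a finite subset of $\{0,\theta(C),\to\}$; in case~(2), Lemma~\ref{lem13} certifies $\langle \theta(C)/2\rangle \in {\rm I}(C)$, so both alternatives contribute and the union in the statement appears.

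For the reverse inclusions I would appeal to Proposition~\ref{prop14}: every finite $X\subseteq \{0,\theta(C),\to\}$ is $C$-admissible, so ${\rm L}_C(X)$ is well defined and, by Lemma~\ref{lem15} and the definition of ${\rm L}_C$, belongs to ${\rm I}(C)$; and in case~(2), Lemma~\ref{lem13} directly gives $\langle \theta(C)/2\rangle \in {\rm I}(C)$. I do not anticipate a genuine obstacle: the statement is essentially a repackaging of Lemmas~\ref{lem7}, \ref{lem11}, \ref{lem12}, \ref{lem13} and Proposition~\ref{prop14}. The only delicate point worth flagging is the identification $M = {\rm L}_C({\rm msg}(M))$, since a priori ${\rm L}_C$ is an intersection over $C$-incentives rather than the monoid closure $\langle \cdot\rangle$; this is handled by noting that $M$ itself is one of the $C$-incentives being intersected.
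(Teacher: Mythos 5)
Your proposal is correct and follows essentially the same route as the paper: represent each $M\in{\rm I}(C)$ as ${\rm L}_C(X)$ for the finite set $X={\rm msg}(M)$ via Lemma~\ref{lem7}, then apply Lemmas~\ref{lem12} and \ref{lem13} to place $X$ inside $\{0,\theta(C),\to\}$ or to identify $M$ with the exceptional monoid $\left\langle \frac{\theta(C)}{2}\right\rangle$. Your explicit treatment of the reverse inclusions (via Lemma~\ref{lem11}/Proposition~\ref{prop14}) is a point the paper leaves implicit, but it is not a different argument.
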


\begin{proof}
Let us observe that, if $M \in {\rm I}(C)$, then $M$ is a submonoid of $({\mathbb N},+)$ and, by Lemma~\ref{lem7}, there exists a finite subset $X$ of ${\mathbb N}$ such that $M=\langle X \rangle$ and, moreover, $M={\rm L}_C(X)$. Now, by Lemmas~\ref{lem12} and \ref{lem13}, we have that, if $X \not\subseteq \{0,\theta(C),\to\}$, then $X \subseteq \left\langle \frac{\theta(C)}{2} \right\rangle$, $C \subseteq \left\{k\frac{\theta(C)}{2} \mid  k \in \{-2,-1\} \cup {\mathbb N} \right\}$, and, consequently, ${\rm L}_C(X) = \left\langle \frac{\theta(C)}{2} \right\rangle$.  
\end{proof}

Let us observe that $\emptyset$ is a $C$-admissible set and ${\rm L}_C(\emptyset)=\{0\}$. On the other hand, we have that $X$ is a $C$-admissible set if and only if $X\setminus \{0\}$ is a $C$-admissible set, and that ${\rm L}_C(X) = {\rm L}_C(X\setminus \{0\})$. All these considerations allow us to focus on the computation of ${\rm L}_C(X)$ when $X$ is a non-empty finite set of positive integers contained in $\{\theta(C),\to\}$.

\begin{proposition}\label{prop17}
Let $X=\{x_1,\ldots,x_t\}$ be a set of positive integers contained in $\{\theta(C),\to\}$ and let us suppose that $C=\{c_1,\ldots,c_q\}$. Then 
	$${\rm L}_C(X)= \left\{ a_1x_1+\cdots+a_tx_t+b_1c_1+\cdots+b_qc_q \mid a_1,\ldots,a_t,b_1,\ldots,b_q \in {\mathbb N} \right.$$
	$$\left. \mbox{ and } a_1+\cdots+a_t > b_1+\cdots+b_q \right\} \cup \{0\}.$$
\end{proposition}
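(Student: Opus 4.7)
The plan is to establish the two inclusions ${\rm L}_C(X) \subseteq R$ and $R \subseteq {\rm L}_C(X)$ separately, where $R$ denotes the right-hand side of the claimed equality.

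For the first inclusion I will show that $R$ is itself a $C$-incentive containing $X$, then invoke the minimality of ${\rm L}_C(X)$. The containment $X \subseteq R$ is immediate by taking one $a_i = 1$ and all other coefficients zero. That $R \subseteq {\mathbb N}$ follows from $x_i \geq \theta(C)$ and $c_j \geq -\theta(C)$: any valid representation satisfies $\sum a_i x_i + \sum b_j c_j \geq \theta(C)(\sum a_i - \sum b_j) \geq \theta(C) \geq 0$. Closure under addition holds because concatenating two valid representations preserves the strict inequality $\sum a > \sum b$. For the incentive property, given two nonzero elements of $R$ with representations satisfying $\sum a_i^{(k)} \geq \sum b_j^{(k)} + 1$ for $k = 1, 2$, adding them together with one $c_\ell \in C$ produces a representation whose total $a$-count $\sum a_i^{(1)} + \sum a_i^{(2)}$ strictly exceeds its total $b$-count $\sum b_j^{(1)} + \sum b_j^{(2)} + 1$, by a margin of at least one.

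For the reverse inclusion $R \subseteq M := {\rm L}_C(X)$, the key preliminary fact is $M \setminus \{0\} \subseteq \{\theta(C),\to\}$: since $\{0,\theta(C),\to\}$ is a $C$-incentive containing $X$ by Lemma~\ref{lem11}, the minimality of $M$ forces $M \subseteq \{0,\theta(C),\to\}$. Now take a nonzero $e = \sum a_i x_i + \sum b_j c_j \in R$ with $m := \sum a_i > n := \sum b_j$, and list the $x$'s and $c$'s according to their multiplicities as $y_1,\ldots,y_m$ (from $X$) and $z_1,\ldots,z_n$ (from $C$). If $n = 0$, then $e$ lies in $M$ as a monoid sum of elements of $X$. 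Otherwise, define $u_1 := y_1$ and, for $k = 1,\ldots,n$, set $u_{k+1} := u_k + y_{k+1} + z_k$; at each step the $C$-incentive property applied to $u_k, y_{k+1} \in M \setminus \{0\}$ with $z_k \in C$ places $u_{k+1}$ in $M$. Finally $e = u_{n+1} + y_{n+2} + \cdots + y_m$ is a monoid sum in $M$, finishing the argument.

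The delicate point, which I expect to be the main obstacle, is verifying that every intermediate $u_k$ is strictly positive, so that the incentive property can indeed be invoked at the next step. The bound $u_k \geq k\,\theta(C) - (k-1)\,\theta(C) = \theta(C)$ handles the case $\theta(C) > 0$. When $\theta(C) = 0$, one has $C \subseteq {\mathbb N}$, so each $z_l \geq 0$ and $u_k \geq y_1 \geq 1$. Once this positivity is secured, the remainder is routine bookkeeping.
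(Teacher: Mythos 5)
Your proof is correct and follows essentially the same route as the paper: both directions proceed by showing the right-hand side is a $C$-incentive containing $X$ (with the identical lower bound $\sum a_ix_i+\sum b_jc_j\geq(\sum a_i-\sum b_j)\,\theta(C)$) and then recovering the reverse inclusion by applying the incentive property once per occurrence of an element of $C$. The only difference is presentational: the paper peels off one pair $(x_i,c_j)$ at a time by induction on $b_1+\cdots+b_q$, whereas you build the element up iteratively, and your explicit verification that the intermediate sums stay positive (splitting on $\theta(C)>0$ versus $\theta(C)=0$) is the same nonvanishing check the paper dispatches with the remark that $a_1+\cdots+a_t\geq 2$.
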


\begin{proof}
Let $A= \{ a_1x_1+\cdots+a_tx_t+b_1c_1+\cdots+b_qc_q \mid a_1,\ldots,a_t,b_1,\ldots,b_q \in {\mathbb N} \mbox{ and } a_1+\cdots+a_t > b_1+\cdots+b_q \} \cup \{0\}$. Since $a_1x_1+\cdots+a_tx_t+b_1c_1+\cdots+b_qc_q \geq (a_1+\cdots+a_t)\theta(C)-(b_1+\cdots+b_q)\theta(C) \geq \theta(C)$, then we have that $A \subseteq {\mathbb N}$.

It easy to show that $A$ is closed for the addition, $0 \in A$, and that, if $x,y \in A \setminus \{0\}$, then $\{x+y\} + C \subseteq A$. Therefore, $A$ is a $C$-incentive.

Now, it is obvious that $X \subseteq A$ and, consequently, ${\rm L}_C(X) \subseteq A$. Thus, in order to finish the proof, it is enough to show that $A \subseteq {\rm L}_C(X)$. Thereby, let $x=a_1x_1+\cdots+a_tx_t+b_1c_1+\cdots+b_qc_q \in A$ and let us apply induction over $b_1+\cdots+b_q$ to prove that $x \in {\rm L}_C(X)$.

If $b_1+\cdots+b_q=0$, then $x=a_1x_1+\cdots+a_tx_t \in {\rm L}_C(X)$. Thus, we can suppose that $b_1+\cdots+b_q\geq 1$ and, consequently, $a_1+\cdots+a_t\geq2$. Thereby there exist $j \in \{1,\ldots,q\}$ and $i \in \{1,\ldots,t\}$ such that $b_j\not=0$ and $a_i\not=0$. By hypothesis of induction, we have that $x-x_i-c_j \in {\rm L}_C(X)$. Moreover, since $a_1+\cdots+a_t\geq2$, we deduce that $x-x_i-c_j \not= 0$. Now, applying that ${\rm L}_C(X)$ is a $C$-incentive, we have that $\{x-x_i-c_j\}+\{x_i\}+C \subseteq {\rm L}_C(X)$. Therefore, $x\in {\rm L}_C(X)$.
\end{proof}

Let us illustrate the above results with several examples.

\begin{example}\label{exmp18a}
Let us compute ${\rm L}_{\{-3,2\}}(\{5,7,9,11\})$. By Proposition~\ref{prop17}, since $\theta(\{-3,2\})=3$ and $\{5,7,9,11\} \subseteq \{3,\to\}$, we have that ${\rm L}_{\{-3,2\}}(\{5,7,9,11\})=\{a_1 5+a_2 7+a_3 9+a_4 11+b_1 (-3)+b_2 2 \mid a_1,a_2,a_3,a_4,b_1,b_2$$\in {\mathbb N} \mbox{ and } a_1+a_2+a_3+a_4>b_1+b_2\} \cup \{0\} = \{0,5,7,9,10,11,12,14,\to\}= \langle 5,7,9,11,13 \rangle$.
\end{example}

\begin{example}\label{exmp18b}
In order to compute ${\rm L}_{\{-4,6\}}(\{2,8\})$, observe that $\theta(\{-4,6\})=4$, $\{-4,6\} \subseteq \{k\,\frac{4}{2} \mid k\in\{-2,-1\}\cup {\mathbb N}\}$, and $\{2,8\} \subseteq \langle \frac{4}{2} \rangle = \langle 2 \rangle$. Now, by Proposition~\ref{prop14}, we have that $\{2,8\}$ is a $\{-4,6\}$-admissible set and, therefore, there exists ${\rm L}_{\{-4,6\}}(\{2,8\})$. From Theorem~\ref{thm16}, we conclude that ${\rm L}_{\{-4,6\}}(\{2,8\}) = \langle 2 \rangle$.
\end{example}

\begin{example}\label{exmp18c}
From Proposition~\ref{prop14}, we have that $\{3\}$ is not a $\{-4,6\}$-admissible set. Therefore, there does not exist ${\rm L}_{\{-4,6\}}(\{3\})$.
\end{example}

\begin{example}\label{exmp18d}
From Proposition~\ref{prop14}, we have that $\{2\}$ is not a $\{-4,7\}$-admissible set. Therefore, there does not exist ${\rm L}_{\{-4,7\}}(\{2\})$.
\end{example}

Now we are ready to show the algorithm that allows us to compute ${\rm L}_C(X)$ if $X$ is a non-empty finite set of positive integers contained in $\{\theta(C), \to \}$ (such as in Example~\ref{exmp18a}). This algorithm provides us an alternative method to the one given in Proposition~\ref{prop17}. Moreover, its validity and correctness is justified by Proposition~\ref{prop8}.

\begin{algorithm}\label{alg19}
INPUT: A non-empty finite set $X \subseteq \{\theta(C),\to\}$. \par
OUTPUT: The minimal system of generators of ${\rm L}_C(X)$. \par
(1) $D=\emptyset$. \par
(2) $Y={\rm msg}(\langle X \rangle)$. \par
(3) $E=\{s+t \mid s,t \in Y\} \setminus D$. \par
(4) $Z=Y \cup \left( \bigcup_{e\in E} \{e\}+C\right)$.\par
(5) If ${\rm msg}(\langle Z \rangle)=Y$, then return $Y$. \par
(6) Set $Y={\rm msg}(\langle Z \rangle)$, $D=D\cup E$, and go to (3).\par
\end{algorithm}

Let us illustrate the performance of this algorithm with an example.

\begin{example}\label{exmp20}
Let us compute ${\rm L}_{\{-3,2\}}(\{5,7,9,11\})$.
\begin{itemize}
\item $D=\emptyset$.
\item $Y=\{5,7,9,11\}$.
\item $E=\{10,12,14,16,18,20,22\}$.
\item $Z=\{5,7,9,11,12,13,14,15,16,17,18,19,20,22,24\}$, \\ ${\rm msg}(\langle Z \rangle)=\{5,7,9,11,13\}$.
\item $Y=\{5,7,9,11,13\}$, \\ $D=\{10,12,14,16,18,20,22\}$.
\item $E=\{24,26\}$.
\item $Z=\{5,7,9,11,13,21,23,26,28\}$, \\ ${\rm msg}(\langle Z \rangle)=\{5,7,9,11,13\}$.
\item $Y=\{5,7,9,11,13\}$.
\end{itemize}
Therefore, ${\rm L}_{\{-3,2\}}(\{5,7,9,11\})=\langle 5,7,9,11,13 \rangle$.
\end{example}

Observe that the most complex process in Algorithm~\ref{alg19} is the computation of ${\rm msg}(\langle Z \rangle)$, that is, compute the minimal system of generators of a monoid $M$ starting from any system of generators of it. For this purpose, we can use the \texttt{GAP} package \texttt{numericalsgps} (see \cite{numericalsgps}).

\section{Numerical $C$-incentives}\label{numerical-incentives}

Let $M$ be a $C$-incentive. We will say that $M$ is \emph{numerical} (that is, $M$ is a \emph{numerical $\,C$-incentive}) if $\gcd(M)=1$ (or, equivalently, if ${\mathbb N} \setminus M$ is a finite set). The purpose of this section is to show that, for the study of $C$-incentives, we can focus on numerical $C$-incentives.

Along this section, we will suppose that $C=\{c_1,\ldots,c_q\}$ is a non-empty subset of ${\mathbb Z}$ and denote by ${\rm NI}(C)=\{M \in {\rm I}(C) \mid M \mbox{ is numerical}\;\!\}$.

\begin{proposition}\label{prop21}
Let $X$ be a non-empty subset of $\{\theta(C),\to\}\setminus \{0\}$. Then ${\rm L}_C(X)$ is a numerical semigroup if and only if $\gcd(X\cup C)=1$.
\end{proposition}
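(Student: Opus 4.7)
The plan is to use the explicit description of ${\rm L}_C(X)$ furnished by Proposition~\ref{prop17}. Writing $X=\{x_1,\ldots,x_t\}$ and $C=\{c_1,\ldots,c_q\}$, every nonzero element of ${\rm L}_C(X)$ has the form $\sum a_i x_i + \sum b_j c_j$, so it is in particular a $\mathbb{Z}$-linear combination of the elements of $X\cup C$. Both directions of the equivalence will flow from this observation.

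For the necessary condition, I would argue directly: suppose $\gcd({\rm L}_C(X))=1$. If $d$ is any common divisor of $X\cup C$, then by the formula above $d$ divides every element of ${\rm L}_C(X)$, hence $d\mid\gcd({\rm L}_C(X))=1$. Thus $\gcd(X\cup C)=1$.

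For the sufficient condition, assume $\gcd(X\cup C)=1$. The strategy is to exhibit a finite subset of ${\rm L}_C(X)$ whose $\gcd$ is already $1$. Pick $x_1\in X$ and for each $c_j\in C$ consider the integer $2x_1+c_j$. By Proposition~\ref{prop17} (with $a_1=2$, the remaining $a_i=0$, $b_j=1$ and the remaining $b$'s $=0$, noting $2>1$) this number belongs to ${\rm L}_C(X)$; and positivity is guaranteed because $x_1\geq\theta(C)\geq -c_j$ for every $j$, giving $2x_1+c_j\geq x_1>0$. Now set $F=\{x_1,\ldots,x_t\}\cup\{2x_1+c_1,\ldots,2x_1+c_q\}\subseteq {\rm L}_C(X)$. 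Any common divisor of $F$ divides $x_1$ and $2x_1+c_j$, hence divides $c_j$; so it is a common divisor of $X\cup C$ and must equal $1$. Therefore $\gcd({\rm L}_C(X))\mid\gcd(F)=1$, which means ${\rm L}_C(X)$ is a numerical semigroup.

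The only real subtlety is the positivity of $2x_1+c_j$, which is exactly what the hypothesis $X\subseteq\{\theta(C),\to\}$ is designed to secure; everything else is a direct reading of Proposition~\ref{prop17}. No induction, no appeal to the algorithm of Section~\ref{smallest-incentive}, and no case split between $\theta(C)=0$ and $\theta(C)>0$ should be needed.
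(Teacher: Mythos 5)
Your argument is correct. The sufficiency half is essentially the paper's: the authors take $A=X\cup(2X+C)$ and use $\gcd\{x,2x+c_1,\ldots,2x+c_q\}=\gcd\{x,c_1,\ldots,c_q\}$, so your set $F=X\cup(\{2x_1\}+C)$ is just a sub-family of theirs; the only cosmetic difference is that they obtain $2x+c\in{\rm L}_C(X)$ directly from the defining property of a $C$-incentive rather than from Proposition~\ref{prop17}. Where you genuinely diverge is the necessity. The paper argues by contraposition: if $d=\gcd(X\cup C)\neq 1$, it exhibits the explicit $C$-incentive $M=\{kd\mid kd\geq\theta(C)\}\cup\{0\}$ containing $X$, so ${\rm L}_C(X)\subseteq M$ has infinite complement. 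You instead read off from the closed formula of Proposition~\ref{prop17} that every common divisor of $X\cup C$ divides every element of ${\rm L}_C(X)$, hence divides $\gcd({\rm L}_C(X))=1$. Your route is shorter once Proposition~\ref{prop17} is in hand; the paper's is self-contained, staying within the minimality characterization of ${\rm L}_C(X)$ and requiring no formula. The one loose thread in your version is that Proposition~\ref{prop17} is stated for finite $X$ while Proposition~\ref{prop21} allows $X$ infinite; this is harmless, since every element of ${\rm L}_C(X)$ already lies in ${\rm L}_C(F)$ for some finite $F\subseteq X$, but it deserves a sentence if you want the necessity direction to apply verbatim.
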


\begin{proof}
(Necessity.) Let us suppose that $\gcd(X\cup C)=d\not=1$. Then it is clear that $M=\{kd \mid kd \geq \theta(C) \} \cup \{0\}$ is a $C$-incentive containing $X$ and, therefore, ${\rm L}_C(X) \subseteq M$. Since ${\mathbb N} \setminus M$ is not a finite set, then ${\mathbb N} \setminus {\rm L}_C(X)$ is not finite and, consequently, ${\rm L}_C(X)$ is not a numerical semigroup.

(Sufficiency.) Let $A=X \cup (2X+C)$. It is clear that $A \subseteq {\rm L}_C(X)$. On the other hand, if $x\in X$, then $\gcd\{x,2x+c_1,\ldots,2x+c_q\}=\gcd\{x,c_1,\ldots,c_q\}$. Thereby, $\gcd(A)=1$. Consequently, $\gcd({\rm L}_C(X))=1$, that is, ${\rm L}_C(X)$ is a numerical semigroup.
\end{proof}

\begin{corollary}\label{cor22}
If $\gcd(C)=1$, then ${\rm I}(C)={\rm NI}(C) \cup \{\{0\}\}$.
\end{corollary}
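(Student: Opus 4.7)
The plan is to prove the two inclusions separately. The inclusion ${\rm NI}(C) \cup \{\{0\}\} \subseteq {\rm I}(C)$ is immediate: every numerical $C$-incentive is by definition a $C$-incentive, and the submonoid $\{0\}$ satisfies Definition~\ref{def1} vacuously since it has no nonzero elements. For the nontrivial inclusion, I would take an arbitrary $M \in {\rm I}(C)$ with $M \neq \{0\}$ and show $\gcd(M)=1$. The structural dichotomy provided by Lemma~\ref{lem12} splits the argument into two cases: either $M \subseteq \{0,\theta(C),\to\}$, or $M = \bigl\langle \frac{\theta(C)}{2}\bigr\rangle$.

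In the first case, set $X = {\rm msg}(M)$, which by Lemma~\ref{lem7} is a finite set and, since $M \neq \{0\}$, is a non-empty subset of $\{\theta(C),\to\} \setminus \{0\}$. A short bookkeeping step identifies $M$ with ${\rm L}_C(X)$: the monoid $\langle X\rangle = M$ is a $C$-incentive containing $X$, so ${\rm L}_C(X) \subseteq M$, and conversely any $C$-incentive containing $X$ contains the submonoid $\langle X\rangle = M$, so $M \subseteq {\rm L}_C(X)$. Now Proposition~\ref{prop21} applies: the hypothesis $\gcd(C)=1$ forces $\gcd(X \cup C) = 1$, and therefore ${\rm L}_C(X) = M$ is numerical.

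In the second case, Lemma~\ref{lem13} is automatic, so every element of $C$ is an integer multiple of $\frac{\theta(C)}{2}$. Hence $\frac{\theta(C)}{2}$ divides $\gcd(C) = 1$, which forces $\frac{\theta(C)}{2} = 1$ and therefore $M = \langle 1\rangle = {\mathbb N}$, which is numerical. Combining the two cases gives $M \in {\rm NI}(C)$, completing the reverse inclusion. The only subtle step is this exceptional case, where one must notice that the divisibility built into Lemma~\ref{lem13} is incompatible with $\gcd(C)=1$ unless the generator is $1$; the remainder is straightforward application of the results already established.
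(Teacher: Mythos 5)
Your proof is correct and follows essentially the same route as the paper: the paper invokes Theorem~\ref{thm16} together with Proposition~\ref{prop21}, and your argument simply unpacks Theorem~\ref{thm16} back into its ingredients (Lemma~\ref{lem12}, Lemma~\ref{lem13}, and the identification $M={\rm L}_C({\rm msg}(M))$). The handling of the exceptional case $M=\left\langle \frac{\theta(C)}{2}\right\rangle$, where the divisibility forced by Lemma~\ref{lem13} combined with $\gcd(C)=1$ yields $\frac{\theta(C)}{2}=1$, is exactly the observation the paper makes.
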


\begin{proof}
First of all, let us observe that, if $\gcd(C)=1$ and we are in the case 2 of Theorem~\ref{thm16}, then $\frac{\theta(C)}{2}=1$. Therefore, $\langle \frac{\theta(C)}{2} \rangle = {\mathbb N}$ that is a numerical semigroup.

In any other case, the conclusion follows easily from Proposition~\ref{prop21} and Theorem~\ref{thm16}.
\end{proof}

Now we want to study the case $\gcd(C)\not=1$. Firstly we need two lemmas.

\begin{lemma}\label{lem23}
Let $M$ be a $C$-incentive such that $M\not= \{0\}$. Then $\gcd(M)$ divides $\gcd(C)$.
\end{lemma}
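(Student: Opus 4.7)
The plan is to pick any nonzero element $s \in M$ (which exists because $M \neq \{0\}$), exploit the $C$-incentive property with $t = s$ to land inside $M$ a family of elements parametrized by $C$, and then extract divisibility from the fact that $\gcd(M)$ divides everything in $M$.

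More precisely, set $d = \gcd(M)$. I would first observe that $d$ divides $s$, since $s \in M$. Then, applying Definition~\ref{def1} to the pair $(s,s) \in (M\setminus\{0\}) \times (M\setminus\{0\})$, I obtain $\{2s\} + C \subseteq M$; in particular, for every $c \in C$, the integer $2s + c$ lies in $M$, so $d \mid 2s + c$. Combined with $d \mid 2s$, this yields $d \mid c$ for every $c \in C$, hence $d \mid \gcd(C)$.

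There is essentially no obstacle here: the argument is a one-line application of the $C$-incentive closure property. The only subtlety worth flagging is that the hypothesis $M \neq \{0\}$ is used precisely to guarantee the existence of the nonzero element $s$; without it the statement $\{s+t\}+C \subseteq M$ would be vacuous and one could not extract information about $C$.
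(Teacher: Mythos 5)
Your proof is correct and follows essentially the same route as the paper: take a nonzero $x\in M$, use the incentive property to get $2x+c\in M$ for all $c\in C$, and deduce $\gcd(M)\mid c$ since $\gcd(M)$ divides both $x$ and $2x+c$. The paper phrases this as $\gcd(M)\mid\gcd\{x,2x+c_1,\dots,2x+c_q\}=\gcd\{x,c_1,\dots,c_q\}\mid\gcd(C)$, which is the same computation.
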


\begin{proof}
Let $x \in M\setminus \{0\}$. Then $\{x,2x+c_1,\ldots,2x+c_q\} \subseteq M$ and, therefore, $\gcd(M) | \gcd\{x,2x+c_1,\ldots,2x+c_q\}$. Now, being that $\gcd\{x,2x+c_1,\ldots,2x+c_q\} = \gcd\{x,c_1,\ldots,c_q\}$ and $\gcd\{x,c_1,\ldots,c_q\} | \gcd\{c_1,\ldots,c_q\}$, we conclude that $\gcd(M) | \gcd(C)$.
\end{proof}

\begin{lemma}\label{lem24}
Let $M$ be a submonoid of $({\mathbb N},+)$ such that $M\not= \{0\}$ and let $d=\gcd(M)$. Then $M$ is a $C$-incentive is and only if $\frac{M}{d}$ is a $\frac{C}{d}$-incentive.
\end{lemma}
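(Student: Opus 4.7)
The plan is to show that scaling by $d$ gives a natural correspondence that preserves the $C$-incentive property. The argument is essentially a direct verification once the bracketing issue (namely, whether $\frac{C}{d}$ makes sense as a set of integers) is handled correctly on each side.

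First I would observe that for any submonoid $M$ of $(\mathbb{N},+)$ with $\gcd(M)=d$, the set $\frac{M}{d}=\{m/d\mid m\in M\}$ is again a submonoid of $(\mathbb{N},+)$ (the map $m\mapsto m/d$ is additive and bijective between $M$ and $\frac{M}{d}$, and sends $0$ to $0$). In particular, $\gcd\bigl(\frac{M}{d}\bigr)=1$, so $\frac{M}{d}$ is a numerical semigroup. This remark is what makes the lemma useful for reducing the study of $C$-incentives to numerical ones.

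For the \emph{only if} direction, I would first appeal to Lemma~\ref{lem23}: since $M\neq\{0\}$ is a $C$-incentive, $d=\gcd(M)$ divides $\gcd(C)$, hence $d\mid c$ for every $c\in C$, so $\frac{C}{d}\subseteq\mathbb{Z}$ is well defined. Then, given $x',y'\in\frac{M}{d}\setminus\{0\}$ and $c'\in\frac{C}{d}$, write $x=dx'$, $y=dy'$, $c=dc'$. Then $x,y\in M\setminus\{0\}$ and $c\in C$, so by the $C$-incentive property of $M$ we have $x+y+c\in M$, i.e.\ $d(x'+y'+c')\in M$, and dividing by $d$ gives $x'+y'+c'\in\frac{M}{d}$. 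Thus $\frac{M}{d}$ is a $\frac{C}{d}$-incentive.

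The \emph{if} direction is the mirror image: the very hypothesis that $\frac{C}{d}$ is a (non-empty) subset of $\mathbb{Z}$ forces $d\mid c$ for all $c\in C$ (otherwise the statement is vacuous or ill-posed). Given $x,y\in M\setminus\{0\}$ and $c\in C$, the elements $x/d$, $y/d$ lie in $\frac{M}{d}\setminus\{0\}$ and $c/d\in\frac{C}{d}$, so $(x/d)+(y/d)+(c/d)\in\frac{M}{d}$, which multiplied by $d$ gives $x+y+c\in M$. There is no real obstacle here; the only point worth being careful about is the well-definedness of $\frac{C}{d}$, which in one direction is granted by Lemma~\ref{lem23} and in the other is built into the hypothesis.
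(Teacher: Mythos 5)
Your proof is correct and follows essentially the same route as the paper's: a direct verification in each direction via the bijective scaling $m\mapsto m/d$, with Lemma~\ref{lem23} invoked in the necessity direction to guarantee that $d$ divides every element of $C$ so that $\frac{C}{d}\subseteq\mathbb{Z}$. The extra remarks on well-definedness and on $\frac{M}{d}$ being a numerical semigroup are harmless additions; no gap.
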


\begin{proof}
(Necessity.) If $x,y \in \frac{M}{d} \setminus \{0\}$, then $dx,dy \in M \setminus \{0\}$. Since $M$ is a $C$-incentive, then $\{dx+dy\}+C \subseteq M$. From Lemma~\ref{lem23}, we know that $d | \gcd(C)$ and, consequently, $\{x+y\} + \frac{C}{d} \subseteq \frac{M}{d}$. Therefore, $\frac{M}{d}$ is a $\frac{C}{d}$-incentive.

(Sufficiency.) If $a,b \in M \setminus \{0\}$, then $\frac{a}{d}, \frac{b}{d} \in \frac{M}{d} \setminus \{0\}$. Since $\frac{M}{d}$ is a $C$-incentive, then $\{\frac{a}{d} + \frac{b}{d}\} + \frac{C}{d} \subseteq \frac{M}{d}$ and, therefore, $\{a+b\}+C \subseteq M$. In this way, $M$ is a $C$-incentive.
\end{proof}

\begin{theorem}\label{thm25}
Let $D$ be the set of all positive divisors of $\gcd(C)$. Then we have that ${\rm I}(C) \setminus \{0\} = \bigcup_{d \in D} \left\{dS \mid S \in {\rm NI}\left(\frac{C}{d}\right)\right\}$.
\end{theorem}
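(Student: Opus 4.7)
The plan is to prove the theorem by showing the two inclusions separately, with both directions being essentially immediate consequences of Lemmas~\ref{lem23} and \ref{lem24}.

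For the inclusion $\subseteq$, I would take any $M \in {\rm I}(C)\setminus\{\{0\}\}$ and set $d = \gcd(M)$. Lemma~\ref{lem23} tells us immediately that $d \mid \gcd(C)$, hence $d \in D$. Now let $S = \frac{M}{d}$. By construction $\gcd(S) = 1$, so $S$ is a numerical semigroup; and by the necessity part of Lemma~\ref{lem24}, $S = \frac{M}{d}$ is a $\frac{C}{d}$-incentive. Therefore $S \in {\rm NI}\!\left(\frac{C}{d}\right)$ and $M = dS$, placing $M$ in the right-hand union.

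For the inclusion $\supseteq$, I would fix $d \in D$ and $S \in {\rm NI}\!\left(\frac{C}{d}\right)$, and set $M = dS$. Since $S$ is a numerical semigroup, $S \neq \{0\}$, so $M \neq \{0\}$, and clearly $M$ is a submonoid of $({\mathbb N},+)$ with $\gcd(M) = d$ (using $\gcd(S)=1$). Since $\frac{M}{d} = S$ is a $\frac{C}{d}$-incentive, the sufficiency part of Lemma~\ref{lem24} yields that $M$ is a $C$-incentive, so $M \in {\rm I}(C) \setminus \{\{0\}\}$.

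There is no real obstacle here: both directions are direct applications of the two preceding lemmas, with the only small bookkeeping being to verify $\gcd(dS) = d$ (immediate from $\gcd(S)=1$) and that $S \neq \{0\}$ in order to invoke Lemma~\ref{lem24}, whose hypothesis excludes the trivial monoid. Accordingly, the write-up can be kept very short: a single paragraph stating each inclusion with a citation to the relevant lemma is sufficient.
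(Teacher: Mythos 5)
Your proposal is correct and follows essentially the same route as the paper: the forward inclusion via Lemma~\ref{lem23} (to get $d\in D$) and Lemma~\ref{lem24} (to get $\frac{M}{d}\in{\rm NI}\left(\frac{C}{d}\right)$), and the reverse inclusion by applying Lemma~\ref{lem24} to $M=dS$. The only difference is that you spell out the bookkeeping ($\gcd(dS)=d$, $S\neq\{0\}$) that the paper leaves implicit.
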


\begin{proof}
Let $M \in {\rm I}(C)$ such that $M \not= \{0\}$ and $\gcd(M)=d$. Then, by applying Lemmas~\ref{lem23} and \ref{lem24}, it is clear that $d \in D$ and $\frac{M}{d} \in {\rm NI}\left(\frac{C}{d}\right)$. For the other inclusion, by Lemma~\ref{lem24}, if $d \in D$ and $S \in {\rm NI}\left(\frac{C}{d}\right)$, then $dS \in {\rm I}(C)$.
\end{proof}

Let us illustrate the content of the previous theorem with an example.

\begin{example}\label{exmp26}
By applying Theorem~\ref{thm25}, we have that
	$${\rm I}(\{-4,6\}) = \{S \mid S \in {\rm NI}(\{-4,6\})\} \cup \{2S \mid S \in {\rm NI}(\{-2,3\})\} \cup \{\{0\}\}.$$
Thus, in order to compute ${\rm I}(\{-4,6\})$, it is enough to calculate ${\rm NI}(\{-4,6\})$ and ${\rm NI}(\{-2,3\})$.
\end{example}

We finish this section showing that, if we want to compute ${\rm L}_C(X)$, then we can focus on the case in which $\gcd(X \cup C)=1$.

\begin{lemma}\label{lem27}
Let $X$ be a set of positive integers such that $\gcd(X \cup C)=d$. Then $X$ is $C$-admissible if and only if $\frac{X}{d}$ is $\frac{C}{d}$-admissible.
\end{lemma}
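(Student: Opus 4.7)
The plan is to prove both implications by transporting incentives back and forth through division by $d$. Since $d = \gcd(X \cup C)$ divides every element of $X$ and every element of $C$, both $X/d$ (a set of positive integers) and $C/d$ (a non-empty finite subset of ${\mathbb Z}$) are well-defined.

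For the necessity, I would take a $C$-incentive $M$ containing $X$ and pass to its \emph{$d$-divisible part} $M' = M \cap d{\mathbb N}$. This is a submonoid of $({\mathbb N},+)$ (it contains $0$ and is closed under addition because both $M$ and $d{\mathbb N}$ are), and it still contains $X$ because $d \mid x$ for every $x \in X$. Setting $N = M'/d = \{m/d \mid m \in M'\}$ gives a submonoid of $({\mathbb N},+)$ containing $X/d$. The key verification is that $N$ is a $C/d$-incentive: given $a,b \in N \setminus \{0\}$ and $c/d \in C/d$, the elements $da, db$ belong to $M \setminus \{0\}$ and $c \in C$, so $da+db+c \in M$ by the $C$-incentive property of $M$; since $d$ divides each summand (using $d \mid c$), we get $da+db+c \in M'$, so $a+b+c/d \in N$.

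For the sufficiency, I would take a $C/d$-incentive $N$ containing $X/d$ and form $M = dN = \{dn \mid n \in N\}$. This is immediately a submonoid of $({\mathbb N},+)$ containing $X$, and the $C$-incentive condition for $M$ follows by the same scaling argument in reverse: for $dn_1, dn_2 \in M \setminus \{0\}$ and $c \in C$, the $C/d$-incentive property gives $n_1+n_2+c/d \in N$, so $dn_1+dn_2+c = d(n_1+n_2+c/d) \in dN = M$.

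The only delicate point—and the main obstacle—is realizing in the necessity direction that one cannot simply divide $M$ by $d$, because $M$ need not be contained in $d{\mathbb N}$ even though its subset $X$ is; one must first restrict to $M \cap d{\mathbb N}$. What makes the whole construction work is that $d \mid c$ for every $c \in C$ (which is exactly the assumption $d \mid \gcd(C)$ implicit in $d = \gcd(X \cup C)$), guaranteeing that the $C$-incentive condition survives the restriction and the scaling in both directions.
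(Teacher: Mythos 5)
Your proof is correct, but it follows a genuinely different route from the paper's. The paper deduces the lemma in one line from Proposition~\ref{prop14}: admissibility is characterized by explicit conditions involving $\theta(C)$, and since $\theta\left(\frac{C}{d}\right)=\frac{\theta(C)}{d}$, both clauses of that characterization are invariant under division by $d$. You instead argue directly from the definition of admissibility by transporting witness incentives: from a $C$-incentive $M\supseteq X$ you pass to $\left(M\cap d{\mathbb N}\right)/d$, and conversely you scale a $\frac{C}{d}$-incentive $N$ up to $dN$. Your sufficiency direction is essentially the sufficiency half of the paper's Lemma~\ref{lem24}, and your necessity direction correctly identifies the one genuine subtlety of this approach --- that $M$ need not be contained in $d{\mathbb N}$, so one must first intersect with $d{\mathbb N}$ before dividing; the fact that $d\mid c$ for every $c\in C$ is exactly what makes the intersection stable under the incentive condition, and your verification of this is sound. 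The paper's route is shorter given the machinery already established, while yours is self-contained, does not rely on the explicit classification of incentives, and in addition produces concrete incentives witnessing admissibility on each side rather than merely asserting their existence.
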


\begin{proof}
It is a consequence of Proposition~\ref{prop14}, having in mind both of the following facts.
	\begin{enumerate}
	\item $X \subseteq \{\theta(C), \to\}$ if and only if $\frac{X}{d} \subseteq \left\{ \theta\left( \frac{C}{d} \right),\to \right\}$.
	\item $X \subseteq \left\langle \frac{\theta(C)}{2}\right\rangle$ and $C \subseteq  \left\{k\frac{\theta(C)}{2} \mid  k \in \{-2,-1\} \cup {\mathbb N} \right\}$ if and only if $\frac{X}{d} \subseteq \left\langle \frac{\theta\left( \frac{C}{d} \right)}{2}\right\rangle$ and $\frac{C}{d} \subseteq  \left\{k\frac{\theta\left( \frac{C}{d} \right)}{2} \mid  k \in \{-2,-1\} \cup {\mathbb N} \right\}$. \qedhere
	\end{enumerate}
\end{proof}

\begin{proposition}\label{prop28}
Let $X$ be a $C$-admissible set such that $\gcd(X \cup C)=d$. Then $\frac{X}{d}$ is $\frac{C}{d}$-admissible and ${\rm L}_C(X)=d\cdot {\rm L}_{\frac{C}{d}} \left(\frac{X}{d}\right)$.
\end{proposition}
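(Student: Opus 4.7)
The first assertion is exactly Lemma~\ref{lem27}, so the real content is the equality ${\rm L}_C(X)=d\cdot T$, where I abbreviate $T:={\rm L}_{\frac{C}{d}}\!\left(\frac{X}{d}\right)$. The strategy is to prove the two inclusions separately: in each case, exhibit a $C$-incentive (resp.\ a $\frac{C}{d}$-incentive) that contains the appropriate set, and appeal to the minimality provided by Lemma~\ref{lem15}.

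For the inclusion ${\rm L}_C(X)\subseteq dT$, a direct check shows that $dT$ is itself a $C$-incentive. Since $d$ divides every $c\in C$, whenever $dx,dy\in dT\setminus\{0\}$ and $c\in C$ one has $dx+dy+c=d(x+y+\frac{c}{d})\in dT$, using that $\frac{c}{d}\in\frac{C}{d}$ and that $T$ is a $\frac{C}{d}$-incentive. Also $X\subseteq dT$, because $\frac{X}{d}\subseteq T$. Minimality of ${\rm L}_C(X)$ then yields the inclusion.

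For the reverse inclusion $dT\subseteq{\rm L}_C(X)$, the key step is to prove the equality $\gcd({\rm L}_C(X))=d$. One divisibility is quick: $\gcd({\rm L}_C(X))$ divides $\gcd(X)$ and, by Lemma~\ref{lem23}, also $\gcd(C)$, hence it divides $\gcd(X\cup C)=d$. For the reverse, I consider the set $\{m\in{\rm L}_C(X)\mid d\mid m\}$; since $d$ divides every element of $C$, this set is itself a $C$-incentive containing $X$, so by minimality it must coincide with ${\rm L}_C(X)$, which gives $d\mid\gcd({\rm L}_C(X))$. Once $\gcd({\rm L}_C(X))=d$ is in hand, Lemma~\ref{lem24} shows that $\frac{1}{d}{\rm L}_C(X)$ is a $\frac{C}{d}$-incentive containing $\frac{X}{d}$, and the minimality of $T$ forces $T\subseteq\frac{1}{d}{\rm L}_C(X)$, i.e.\ $dT\subseteq{\rm L}_C(X)$.

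The main obstacle is precisely the equality $\gcd({\rm L}_C(X))=d$: Lemma~\ref{lem23} delivers only one direction, and the reverse relies on the ``divisible-by-$d$'' sub-incentive trick above. Its virtue is that it sidesteps the case split of Proposition~\ref{prop14}, so the two families of $C$-incentives distinguished in Theorem~\ref{thm16} need not be handled separately.
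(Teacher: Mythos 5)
Your proof is correct, and it follows a genuinely different route from the paper's. The paper establishes the equality ${\rm L}_C(X)=d\cdot {\rm L}_{\frac{C}{d}}(\frac{X}{d})$ by the same case split used in Proposition~\ref{prop14}: when $X\subseteq\{0,\theta(C),\to\}$ it reads the equality directly off the explicit formula of Proposition~\ref{prop17}, and in the remaining case it invokes Theorem~\ref{thm16} to identify both sides with the appropriate multiple of $\left\langle\frac{\theta(C)}{2}\right\rangle$. You instead argue entirely at the level of the lattice of $C$-incentives: the inclusion ${\rm L}_C(X)\subseteq dT$ comes from checking that $dT$ is itself a $C$-incentive containing $X$ and invoking minimality, and the reverse inclusion hinges on the identity $\gcd({\rm L}_C(X))=d$, which you obtain by the clean observation that $\{m\in{\rm L}_C(X)\mid d\mid m\}$ is again a $C$-incentive containing $X$, so it must equal ${\rm L}_C(X)$. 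Once that gcd is pinned down, Lemma~\ref{lem24} closes the argument. Your approach avoids both the case distinction and any reliance on the explicit description of ${\rm L}_C(X)$; what you pay is the extra step of determining $\gcd({\rm L}_C(X))$, which the paper never needs because it computes ${\rm L}_C(X)$ outright in each branch. One minor point worth making explicit: the appeal to Lemma~\ref{lem23} requires ${\rm L}_C(X)\neq\{0\}$, so the degenerate case $X\subseteq\{0\}$ (where both sides of the equality reduce to $\{0\}$) should be disposed of separately; this is trivial but should be said.
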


\begin{proof}
By Lemma~\ref{lem27} and Proposition~\ref{prop17}, we have that, if $X \subseteq \{0,\theta(C), \to\}$, then ${\rm L}_C(X)=d\cdot {\rm L}_{\frac{C}{d}} \left(\frac{X}{d}\right)$.

On the other hand, by Proposition~\ref{prop14}, if $X \not\subseteq \{0,\theta(C), \to\}$, then $X \subseteq \left\langle \frac{\theta(C)}{2}\right\rangle$ and $C \subseteq  \left\{k\frac{\theta(C)}{2} \mid  k \in \{-2,-1\} \cup {\mathbb N} \right\}$. Thereby, by applying Theorem~\ref{thm16}, we have that ${\rm L}_C(X)=\left\langle \frac{\theta(C)}{2}\right\rangle$. Moreover, by Lemma~\ref{lem27} and Theorem~\ref{thm16}, we get that ${\rm L}_{\frac{C}{d}} \left(\frac{X}{d}\right) = \left\langle \frac{\theta\left( \frac{C}{d} \right)}{2}\right\rangle$. Therefore, ${\rm L}_C(X)=d\cdot {\rm L}_{\frac{C}{d}} \left(\frac{X}{d}\right)$.
\end{proof}

Let us illustrate the content of the above proposition with an example.

\begin{example}\label{exmp29}
Let us take the sets $C=\{-2,2\}$ and $X=\{4,6\}$. Then $\theta(C)=2$ and $X\subseteq \{0,\theta(C),\to\}$. By applying Proposition~\ref{prop14}, we have that $X$ is $C$-admissible. Since $\gcd(X \cup C)=2$, by Proposition~\ref{prop28}, then we have that $\{2,3\}$ is $\{-1,1\}$-admissible and that ${\rm L}_C(X) = 2\cdot {\rm L}_{\{-1,1\}}(\{2,3\})$. Now, from Proposition~\ref{prop8}, we easily deduce that $\langle 2,3 \rangle$ is a $\{-1,1\}$-incentive and, therefore, that ${\rm L}_{\{-1,1\}}(\{2,3\})=\langle 2,3 \rangle$. Consequently, ${\rm L}_C(X)=2\cdot \langle 2,3 \rangle = \langle 4,6 \rangle$.
\end{example}

\section{The Frobenius pseudo-variety of the numerical $C$-incentives}\label{pseudo-variety}

Let $S$ be a numerical semigroup. The \emph{Frobenius number} of $S$, denoted by ${\rm F}(S)$, is the greatest integer that does not belong to $S$ (see \cite{alfonsin}).

A \emph{Frobenius pseudo-variety} is a non-empty family  ${\mathcal P}$ of numerical semigroups that fulfills the following conditions.
\begin{enumerate}
	\item ${\mathcal P}$ has a maximum element $\max({\mathcal P})$ (with respect to the inclusion order).
	\item if $S,T \in {\mathcal P}$, then $S\cap T\in {\mathcal P}$.
	\item if $ S\in {\mathcal P}$ and $S\neq \max({\mathcal P})$, then $S\cup \{{\rm F}(S)\} \in {\mathcal P}$.
\end{enumerate}

Let us observe that a Frobenius pseudo-variety ${\mathcal P}$ is a Frobenius variety if and only if ${\mathbb N} \in {\mathcal P}$ (see \cite[Proposition 1]{pseudovar}).

Along this section, $C$ denotes a non-empty finite subset of ${\mathbb Z}$. Our purpose will be to show that ${\rm NI}(C)$ is a Frobenius pseudo-variety.

\begin{lemma}\label{lem30}
If $r \in {\mathbb N}$, then ${\rm NI}(\{r\})$ is a Frobenius pseudo-variety and, moreover, $\max({\rm NI}(\{r\}))={\mathbb N}$.
\end{lemma}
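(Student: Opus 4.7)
The plan is to verify the three defining conditions of a Frobenius pseudo-variety for $\mathcal{P}={\rm NI}(\{r\})$, with $\mathbb{N}$ serving as the maximum element.

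First I would show that $\mathbb{N}\in {\rm NI}(\{r\})$ and that it is the maximum. The set $\mathbb{N}$ is a numerical semigroup, and for every $s,t\in\mathbb{N}\setminus\{0\}$ and every $r\in\mathbb{N}$ we have $s+t+r\geq 2\in\mathbb{N}$, so $\mathbb{N}$ is a $\{r\}$-incentive. Every element of ${\rm NI}(\{r\})$ is by definition a submonoid of $\mathbb{N}$, so it is contained in $\mathbb{N}$, giving $\max({\rm NI}(\{r\}))=\mathbb{N}$.

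Next I would check closure under intersection. Given $S,T\in {\rm NI}(\{r\})$, the intersection $S\cap T$ is a submonoid of $\mathbb{N}$, and for any $s,t\in(S\cap T)\setminus\{0\}$ the element $s+t+r$ lies in both $S$ and $T$, hence in $S\cap T$. Moreover $\mathbb{N}\setminus(S\cap T)=(\mathbb{N}\setminus S)\cup(\mathbb{N}\setminus T)$ is finite, so $S\cap T$ is numerical. Thus $S\cap T\in {\rm NI}(\{r\})$.

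The heart of the argument is to show that if $S\in {\rm NI}(\{r\})$ and $S\neq\mathbb{N}$, then $S':=S\cup\{{\rm F}(S)\}\in {\rm NI}(\{r\})$. That $S'$ is a numerical semigroup is the standard fact based on the observation that every integer exceeding ${\rm F}(S)$ lies in $S$: for any positive $x\in S'$ one has ${\rm F}(S)+x>{\rm F}(S)$, so ${\rm F}(S)+x\in S\subseteq S'$, which gives closure under addition, and cofiniteness is immediate. To check the incentive condition, let $s,t\in S'\setminus\{0\}$. If both $s,t\in S$, then $s+t+r\in S\subseteq S'$ since $S$ is a $\{r\}$-incentive. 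Otherwise at least one of $s,t$ equals ${\rm F}(S)$, and since the other is $\geq 1$ and $r\geq 0$ we obtain $s+t+r\geq {\rm F}(S)+1$, whence $s+t+r\in S\subseteq S'$.

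The only mildly delicate point is the case analysis at the last step, which reduces immediately to the key observation $n>{\rm F}(S)\Rightarrow n\in S$. The degenerate situation $r=0$ (in which $\{r\}$-incentives coincide with numerical semigroups by Proposition~\ref{prop6}) is handled uniformly by the same estimates, so no separate argument is needed.
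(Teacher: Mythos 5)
Your proof is correct and follows essentially the same route as the paper: verify the three pseudo-variety axioms directly, with the key case $F(S)\in\{s,t\}$ handled by the estimate $s+t+r\geq F(S)+1$ (the paper uses $\geq F(S)$, which also suffices). The extra details you supply (intersection, closure of $S\cup\{F(S)\}$ under addition) are the steps the paper dismisses as clear.
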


\begin{proof}
It is clear that ${\mathbb N} \in {\rm NI}(\{r\})$ and, therefore, that $\max({\rm NI}(\{r\}))={\mathbb N}$. Also, it is easy to see that, if $S,T \in {\rm NI}(\{r\})$, then $S\cap T\in {\rm NI}(\{r\})$. Finally, let us take $S \in {\rm NI}(\{r\}) \setminus {\mathbb N}$ and see that $S \cup \{{\rm F}(S)\} \in {\rm NI}(\{r\})$. Indeed, let $x,y \in (S \cup \{{\rm F}(S)\}) \setminus \{0\}$. If $x,y \in S$, then $x+y+r \in S \subseteq S \cup \{{\rm F}(S)\}$. If ${\rm F}(S) \in \{x,y\}$, then $x+y+r \geq {\rm F}(S)$ and, consequently, $x+y+r \in S \cup \{{\rm F}(S)\}$.
\end{proof}

As a consequence of the previous lemma, we can observe that, if $r \in {\mathbb N}$, then ${\rm NI}(\{r\})$ is a Frobenius variety, since ${\mathbb N} \in {\rm NI}(\{r\})$.

\begin{lemma}\label{lem31}
If $r$ is a positive integer, then ${\rm NI}(\{-r\})$ is a Frobenius pseudo-variety. Moreover,
	$$\max({\rm NI}(\{-r\}))= \left\{ \begin{array}{l} {\mathbb N}, \; \mbox{ if } r \in \{1,2\}, \\[1mm] \{0,r,\to\}, \; \mbox{ if } r\geq 3. \end{array} \right.$$
\end{lemma}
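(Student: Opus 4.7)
The plan is to verify the three defining conditions of a Frobenius pseudo-variety and, in parallel, to pin down the maximum. I would first apply Lemma~\ref{lem12} to $C = \{-r\}$, for which $\theta(C) = r$: every $M \in {\rm NI}(\{-r\})$ satisfies either $M \subseteq \{0, r, \to\}$ or $M = \langle r/2 \rangle$. The second alternative gives a numerical semigroup only when $r = 2$ (yielding ${\mathbb N}$), so for $r \geq 3$ every element of ${\rm NI}(\{-r\})$ is contained in $\{0, r, \to\}$. A direct check then confirms that ${\mathbb N} \in {\rm NI}(\{-r\})$ for $r \in \{1,2\}$ (because $x+y-r \geq 2-r \geq 0$ for nonzero $x, y \in {\mathbb N}$) and that $\{0,r,\to\} \in {\rm NI}(\{-r\})$ for $r \geq 3$ (since $x,y \geq r$ forces $x+y-r \geq r$, and $\gcd\{r,r+1\} = 1$), which pins down the claimed maximum in each case.

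Closure under intersection is routine: if $S, T \in {\rm NI}(\{-r\})$, then $S \cap T$ is a submonoid of $({\mathbb N},+)$; the incentive condition $x+y-r \in S \cap T$ for $x,y \in (S\cap T)\setminus\{0\}$ is inherited from $S$ and $T$ individually, and ${\mathbb N} \setminus (S \cap T) = ({\mathbb N} \setminus S) \cup ({\mathbb N} \setminus T)$ is finite, so $\gcd(S \cap T) = 1$.

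The real work is the third axiom: for $S \in {\rm NI}(\{-r\})$ with $S \neq \max({\rm NI}(\{-r\}))$, I must show $S \cup \{{\rm F}(S)\} \in {\rm NI}(\{-r\})$. Submonoid closure follows from ${\rm F}(S) + s > {\rm F}(S)$ for $s \in S \setminus \{0\}$ and $2{\rm F}(S) > {\rm F}(S)$, and $\gcd = 1$ is inherited from $S$. For the incentive property I take $x, y \in (S \cup \{{\rm F}(S)\}) \setminus \{0\}$: if both lie in $S$ the conclusion is immediate, so I may assume at least one equals ${\rm F}(S)$. When $r \geq 3$, the first step forces ${\rm F}(S) \geq r$ (some integer $\geq r$ is missing from $S$, since $S \subsetneq \{0,r,\to\}$) and $S \setminus \{0\} \subseteq \{r,\to\}$, so in every mixed case $x+y-r \geq {\rm F}(S)$ and hence $x+y-r \in S \cup \{{\rm F}(S)\}$. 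For $r \in \{1,2\}$ the same inequality works after dispatching the small exceptional values: for $r=2$, $y=1 \in S$ would force $S = {\mathbb N}$, contradicting $S \neq \max$, and the remaining subcase $x=y={\rm F}(S)=1$ gives $x+y-2=0 \in S$.

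The main obstacle I anticipate is the bookkeeping in this last case analysis, particularly in the $r \in \{1,2\}$ regime where ${\rm F}(S)$ itself can be very small; once one notes that $1 \in S$ collapses $S$ to ${\mathbb N}$, the other inequalities reduce to the clean observation ${\rm F}(S) + y - r \geq {\rm F}(S)$.
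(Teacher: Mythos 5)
Your proposal is correct and follows essentially the same route as the paper: determine the maximum via Lemma~\ref{lem12} (noting $\langle r/2\rangle$ is a numerical semigroup only for $r=2$), check intersections directly, and verify the third axiom by showing $x+y-r\geq {\rm F}(S)$ in the mixed cases, using that $S\subsetneq\max$ forces $y\geq r$ and handling the small exceptional case ($S=\{0,2,\to\}$ for $r=2$) separately. The only difference is cosmetic: the paper splits the $x=y={\rm F}(S)$ case on ${\rm F}(S)\geq r$ versus ${\rm F}(S)<r$, while you split on $r\geq 3$ versus $r\in\{1,2\}$.
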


\begin{proof}
It is clear that, if $r \in \{1,2\}$, then ${\mathbb N} \in {\rm NI}(\{-r\})$ and, therefore, that $\max({\rm NI}(\{-r\}))={\mathbb N}$. From Lemmas~\ref{lem11} and \ref{lem12}, we easily deduce that, if $r \geq 3$, then $\max({\rm NI}(\{-r\}))=\{0,r,\to\}$ (observe that, if $r \geq 3$, then $\langle \frac{r}{2} \rangle$ is not a numerical semigroup). Also, it is not difficult to check that, if $S,T \in {\rm NI}(\{-r\})$, then $S\cap T\in {\rm NI}(\{-r\})$.

Now, let us see that, if $S \in {\rm NI}(\{-r\})$ and $S \not=\max({\rm NI}(\{-r\}))$, then $S \cup \{{\rm F}(S)\} \in {\rm NI}(\{-r\})$. In order to do this, let us take $x,y \in (S \cup \{{\rm F}(S)\}) \setminus \{0\}$. Firstly, if $x,y \in S$, then $x+y-r \in S \subseteq S \cup \{{\rm F}(S)\}$. Thus, we can suppose that ${\rm F}(S) \in \{x,y\}$. We distinguish two cases.
\begin{enumerate}
\item Let us suppose that $x={\rm F}(S)$ and $y\not={\rm F}(S)$. Then $y \in S \setminus \{0\}$ and, since $S \subsetneqq \max({\rm NI}(\{-r\}))$, we can deduce that $y \geq r$. Therefore, $x+y-r \geq {\rm F}(S)$ and, consequently, $x+y-r \in S \cup \{{\rm F}(S)\}$.

\item Let us suppose that $x=y={\rm F}(S)$. Then $x+y-r=2{\rm F}(S)-r$. We have two possibilities.
	\begin{enumerate}
	\item If ${\rm F}(S)\geq r$, then $2{\rm F}(S)-r\geq {\rm F}(S)$ and, therefore, $x+y-r \in S \cup \{{\rm F}(S)\}$.
	\item If ${\rm F}(S)<r$, since $S \subsetneqq \max({\rm NI}(\{-r\}))$, we have that $r=2$ and $S=\{0,2,\to\}$. Therefore, $S \cup \{{\rm F}(S)\} = {\mathbb N} \in {\rm NI}(\{-2\})$. \qedhere
	\end{enumerate}
\end{enumerate}
\end{proof}

Let us observe that, as a consequence of the previous lemma, we have that ${\rm NI}(\{-2\})$ and ${\rm NI}(\{-1\})$ are Frobenius varieties, since they contain ${\mathbb N}$. On the other hand, if $r\geq 3$, then ${\rm NI}(\{-r\})$ is a Frobenius pseudo-variety but not a Frobenius variety.

\begin{lemma}\label{lem32}
Let $\{{\mathcal P}_i\}_{i\in I}$ be a family of Frobenius pseudo-varieties. If there exists $j \in I$ such that $\max({\mathcal P}_j) \in {\mathcal P}_i$ for all $i \in I$, then $\bigcap_{i \in I} {\mathcal P}_i$ is a Frobenius pseudo-variety and $\max(\bigcap_{i \in I} {\mathcal P}_i)=\max({\mathcal P}_j)$.
\end{lemma}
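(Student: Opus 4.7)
The plan is to verify the three defining properties of a Frobenius pseudo-variety for $\bigcap_{i\in I}{\mathcal P}_i$, identifying $\max({\mathcal P}_j)$ as the maximum and using the hypothesis on $\max({\mathcal P}_j)$ as the key leverage point.

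First I would observe that $\bigcap_{i\in I}{\mathcal P}_i$ is non-empty and has maximum $\max({\mathcal P}_j)$. Indeed, by hypothesis $\max({\mathcal P}_j) \in {\mathcal P}_i$ for every $i\in I$, so it lies in the intersection; and for any $S \in \bigcap_{i\in I}{\mathcal P}_i$ we have in particular $S \in {\mathcal P}_j$, hence $S\subseteq \max({\mathcal P}_j)$. Next, closure under intersection is immediate: if $S,T \in \bigcap_{i\in I}{\mathcal P}_i$, then $S,T \in {\mathcal P}_i$ for each $i$, and since each ${\mathcal P}_i$ is a Frobenius pseudo-variety, $S\cap T\in {\mathcal P}_i$ for every $i$, whence $S\cap T \in \bigcap_{i\in I}{\mathcal P}_i$.

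The delicate step, and the one I expect to be the main obstacle, is the third property. Suppose $S \in \bigcap_{i\in I}{\mathcal P}_i$ and $S\neq \max({\mathcal P}_j)$. To conclude $S\cup\{{\rm F}(S)\} \in \bigcap_{i\in I}{\mathcal P}_i$ I need $S\cup\{{\rm F}(S)\} \in {\mathcal P}_i$ for every $i$, and this requires $S\neq \max({\mathcal P}_i)$ for every $i$. Here the hypothesis $\max({\mathcal P}_j)\in {\mathcal P}_i$ is crucial. For, if for some $i$ we had $S = \max({\mathcal P}_i)$, then on the one hand $\max({\mathcal P}_j) \subseteq \max({\mathcal P}_i) = S$ (because $\max({\mathcal P}_j)\in {\mathcal P}_i$), and on the other hand $S \in {\mathcal P}_j$ gives $S\subseteq \max({\mathcal P}_j)$; combining both inclusions yields $S=\max({\mathcal P}_j)$, contradicting our assumption. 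Hence $S\neq \max({\mathcal P}_i)$, and since ${\mathcal P}_i$ is a Frobenius pseudo-variety we obtain $S\cup\{{\rm F}(S)\}\in {\mathcal P}_i$. Since $i\in I$ was arbitrary, $S\cup\{{\rm F}(S)\} \in \bigcap_{i\in I}{\mathcal P}_i$, as required.

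Putting the three verifications together establishes that $\bigcap_{i\in I}{\mathcal P}_i$ is a Frobenius pseudo-variety and, by the first step, that its maximum is exactly $\max({\mathcal P}_j)$.
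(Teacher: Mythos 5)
Your proof is correct and follows the same route as the paper: identify $\max({\mathcal P}_j)$ as the maximum, note closure under intersection, and verify the Frobenius-number condition componentwise. In fact, you supply the one detail the paper leaves implicit — that $S\neq\max(\bigcap_{i\in I}{\mathcal P}_i)$ forces $S\neq\max({\mathcal P}_i)$ for every $i$, via the two inclusions $\max({\mathcal P}_j)\subseteq\max({\mathcal P}_i)$ and $S\subseteq\max({\mathcal P}_j)$ — which is exactly where the hypothesis $\max({\mathcal P}_j)\in{\mathcal P}_i$ is needed.
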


\begin{proof}
It is clear that $\max(\bigcap_{i \in I} {\mathcal P}_i)=\max({\mathcal P}_j)$. Now, if $S,T \in \bigcap_{i \in I} {\mathcal P}_i$, then $S,T \in  {\mathcal P}_i$ for all $i \in I$ and, therefore, $S \cap T \in \bigcap_{i \in I} {\mathcal P}_i$. Finally, if $S \in \bigcap_{i \in I} {\mathcal P}_i$ and $S\not=\max(\bigcap_{i \in I} {\mathcal P}_i)$, then $S\in {\mathcal P}_i$ and $S\not=\max({\mathcal P}_i)$ for all $i\in I$. Therefore, $S \cup {\rm F}(S) \in {\mathcal P}_i$ for all $i \in I$. Consequently, $S \cup {\rm F}(S) \in \bigcap_{i \in I} {\mathcal P}_i$.
\end{proof}

An immediate consequence of Lemma~\ref{lem11} is the next one.

\begin{lemma}\label{lem33}
$\{0,\theta(C),\to\} \in {\rm NI}(\{c\})$ for all $c \in C$.
\end{lemma}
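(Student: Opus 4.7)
The plan is to obtain this as a near-immediate corollary of Lemma~\ref{lem11}, together with a quick check that the monoid in question is numerical. Set $M = \{0,\theta(C),\to\}$. Lemma~\ref{lem11} already gives us that $M$ is a $C$-incentive, meaning $\{s+t\}+C \subseteq M$ for all $s,t \in M\setminus\{0\}$. Since $\{c\} \subseteq C$ for each $c \in C$, we trivially have $\{s+t\}+\{c\} \subseteq \{s+t\}+C \subseteq M$, so $M$ is automatically a $\{c\}$-incentive.

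Next I would verify that $M$ is numerical, i.e.\ that $\gcd(M)=1$. If $\theta(C)=0$, then $M=\mathbb{N}$, which clearly has gcd $1$. If $\theta(C)\geq 1$, then $M$ contains the consecutive integers $\theta(C)$ and $\theta(C)+1$, whose gcd is $1$, hence $\gcd(M)=1$. In either case $M$ is a numerical semigroup, so $M \in {\rm NI}(\{c\})$ for every $c \in C$.

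There is no real obstacle here; the content of the lemma is essentially a ``restriction'' observation, namely that a $C$-incentive is a fortiori a $\{c\}$-incentive for each $c \in C$, plus the trivial check that $\{0,\theta(C),\to\}$ is numerical. This is exactly what the authors mean when they say ``an immediate consequence of Lemma~\ref{lem11}''.
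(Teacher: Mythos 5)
Your proof is correct and follows exactly the route the paper intends: the paper states the lemma as an immediate consequence of Lemma~\ref{lem11} and omits the proof, and your argument (a $C$-incentive is a fortiori a $\{c\}$-incentive for each $c\in C$, plus the observation that $\{0,\theta(C),\to\}$ contains consecutive integers and hence has $\gcd$ equal to $1$) supplies precisely the missing details.
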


We are ready to show the main result of this section.

\begin{theorem}\label{thm34}
${\rm NI}(C)$ is a Frobenius pseudo-variety. Moreover,
	$$\max({\rm NI}(C))= \left\{ \begin{array}{l} {\mathbb N}, \; \mbox{ if } C \subseteq \{-2,-1\} \cup {\mathbb N}, \\[1mm] \{0,\theta(C),\to\}, \; \mbox{ in other case. } \end{array} \right.$$
\end{theorem}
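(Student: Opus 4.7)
The plan is to exhibit ${\rm NI}(C)$ as an intersection of Frobenius pseudo-varieties and then invoke Lemma~\ref{lem32}. First I would note that, directly from Definition~\ref{def1}, a submonoid $M$ of $({\mathbb N},+)$ is a $C$-incentive if and only if it is a $\{c\}$-incentive for every $c \in C$, so ${\rm NI}(C)=\bigcap_{c\in C}{\rm NI}(\{c\})$. By Lemmas~\ref{lem30} and~\ref{lem31}, every factor in this intersection is a Frobenius pseudo-variety, so Lemma~\ref{lem32} applies as soon as we can identify one index $c_0\in C$ whose maximum element $\max({\rm NI}(\{c_0\}))$ lies in \emph{every} other ${\rm NI}(\{c\})$, $c\in C$.

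I would then split into the two cases dictated by the statement. If $C\subseteq \{-2,-1\}\cup{\mathbb N}$, then Lemmas~\ref{lem30} and~\ref{lem31} give $\max({\rm NI}(\{c\}))={\mathbb N}$ for each $c\in C$; moreover ${\mathbb N}$ is trivially a numerical $\{c\}$-incentive for every such $c$ (since $x+y+c\geq 0$ whenever $x,y\geq 1$ and $c\geq -2$). Hence ${\mathbb N}\in{\rm NI}(\{c\})$ for all $c\in C$, and Lemma~\ref{lem32} yields that ${\rm NI}(C)$ is a Frobenius pseudo-variety with maximum ${\mathbb N}$.

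In the remaining case, some element of $C$ is at most $-3$. Take $c_0=\min C$; then $c_0\leq -3$, so $\theta(C)=-c_0\geq 3$ and Lemma~\ref{lem31} gives $\max({\rm NI}(\{c_0\}))=\{0,-c_0,\to\}=\{0,\theta(C),\to\}$. By Lemma~\ref{lem33}, the set $\{0,\theta(C),\to\}$ belongs to ${\rm NI}(\{c\})$ for every $c\in C$, so once again Lemma~\ref{lem32} applies and gives that ${\rm NI}(C)$ is a Frobenius pseudo-variety with maximum $\{0,\theta(C),\to\}$.

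The only delicate point of the argument is choosing the right index $c_0$ in Lemma~\ref{lem32}: one needs $\max({\rm NI}(\{c_0\}))$ to sit inside every other ${\rm NI}(\{c\})$, and this is exactly what forces the two-case description of $\max({\rm NI}(C))$. Once the candidate maxima are identified, the verification reduces to Lemma~\ref{lem33} (for the negative-minimum case) and the trivial check that ${\mathbb N}$ is a $\{c\}$-incentive when $c\geq -2$ (for the other case).
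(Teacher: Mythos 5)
Your proposal is correct and follows essentially the same route as the paper's proof: writing ${\rm NI}(C)=\bigcap_{c\in C}{\rm NI}(\{c\})$, invoking Lemmas~\ref{lem30} and~\ref{lem31} for the factors, choosing $c_0$ with $\theta(C)=-c_0$ in the second case, and applying Lemmas~\ref{lem33} and~\ref{lem32}. Nothing further is needed.
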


\begin{proof}
It is clear that ${\rm NI}(C) = \bigcap_{c\in C} {\rm NI}(\{c\})$. From Lemmas~\ref{lem30} and \ref{lem31}, we know that ${\rm NI}(\{c\})$ is a Frobenius pseudo-variety for all $c \in C$, and that, if $C \subseteq \{-2,-1\} \cup {\mathbb N}$, then $\max({\rm NI}(\{c\})) = {\mathbb N}$ for all $c \in C$. Thus, from Lemma~\ref{lem32}, we have that ${\rm NI}(C)$ is a Frobenius pseudo-variety with $\max({\rm NI}(C))={\mathbb N}$. Now, let us suppose that $\theta(C)\geq 3$ and let $c_0\in C$ such that $\theta(C)=-c_0$. From Lemma~\ref{lem31}, we have that $\max({\rm NI}(\{c_0\}))=\{0,\theta(C),\to\}$ and, from Lemma~\ref{lem33}, that $\max({\rm NI}(\{c_0\})) \in {\rm NI}(\{c\})$ for all $c \in C$. Therefore, by applying Lemma~\ref{lem32}, ${\rm NI}(C)$ is a Frobenius pseudo-variety with $\max({\rm NI}(C))=\{0,\theta(C),\to\}$.
\end{proof}

\begin{remark}
If $r$ is a positive integer different from two, then $\max({\rm NI}(\{-r\}))=\{0,r,\to\}$. Moreover, as a consequence of Theorem~\ref{thm34}, $\max({\rm NI}(\{-r\}))\not=\{0,2,\to\}$ for all $C \subseteq {\mathbb Z}$. Consequently, we conclude that, if $\{0,2,\to\} \in {\rm NI}(C)$, then ${\mathbb N} \in {\rm NI}(C)$.
\end{remark}

\begin{remark}
From Theorem~\ref{thm34}, ${\rm NI}(C)$ is a Frobenius variety if and only if $C \subseteq \{-2,-1\} \cup {\mathbb N}$. Several of these families have been studied in some previous works. For instance, ${\rm NI}(\{1\})$, ${\rm NI}(\{-1\})$, ${\rm NI}(\{-1,1\})$, and ${\rm NI}(C)$ (for $C \subseteq {\mathbb N}$) are analysed in \cite{frases}, \cite{digitales}, \cite{acotados,benefits}, and \cite{brazaletes}, respectively.
\end{remark}

\section{The tree of the numerical $C$-incentives}\label{tree}

Our purpose in this section will be to arrange the elements of ${\rm NI}(C)$ in a tree with root and to characterize the children in such a tree. Thus, as main result of this paper, we will obtain an algorithmic process that will allow us to recursively build the elements of ${\rm NI}(C)$.

Recall that a \emph{graph} $G$ is a pair $(V,E)$, where
\begin{itemize}
\item $V$ is a non-empty set, which elements are called \emph{vertices} of $G$,
\item $E$ is a subset of $\{(v,w) \in V \times V \mid v \neq w\}$, which elements are called \emph{edges} of $G$.
\end{itemize}
A \emph{path (of length $n$)} connecting the vertices $x$ and $y$ of $G$ is a sequence of different edges of the form $(v_0,v_1),(v_1,v_2),\ldots,(v_{n-1},v_n)$ such that $v_0=x$ and $v_n=y$. Moreover, we say that a graph $G$ is a \emph{tree} if there exists a vertex $v^*$ (known as the \emph{root} of $G$) such that, for every other vertex $x$ of $G$, there exists a unique path connecting $x$ and $v^*$. If $(x,y)$ is an edge of the tree, then we say that $x$ is a \emph{child} of $y$.

In this section, we will suppose that $C$ is a non-empty finite set of ${\mathbb Z}$. We define the graph ${\rm G}(C)$ in the following way.
\begin{itemize}
	\item ${\rm NI}(C)$ is the set of vertices of ${\rm G}(C)$;
	\item $(S,S')\in {\rm NI}(C)\times {\rm NI}(C)$ is an edge of ${\rm G}(C)$ if $S'=S\cup\{{\rm F}(S)\}$.
\end{itemize}
It is well known (see \cite{springer}) that, if $M$ is a submonoid of $({\mathbb N},+)$ and $x\in M$, then $M\setminus \{x\}$ is a monoid if and only if $x\in {\rm msg}(M)$. As a consequence of \cite[Lemma 12, Theorem 3]{pseudovar}, we have the next result.
 
\begin{theorem}\label{thm35}
The graph ${\rm G}(C)$ is a tree with root equal to $\max({\rm NI}(C))$. Moreover, the children of a vertex $S \in {\rm NI}(C)$ are the elements of the set
$$\left\{ S \setminus \{x\} \mid x \in {\rm msg}(S), \; x > {\rm F}(S), \mbox{ and } S \setminus \{x\} \in {\rm NI}(C) \right\}.$$
\end{theorem}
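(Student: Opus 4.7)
The plan is to apply the general theory of Frobenius pseudo-varieties to the family $\mathrm{NI}(C)$, relying on the fact, already established in Theorem~\ref{thm34}, that $\mathrm{NI}(C)$ is a Frobenius pseudo-variety. The argument splits naturally into two parts: verifying that $\mathrm{G}(C)$ is a tree with root $\max(\mathrm{NI}(C))$, and describing the children of an arbitrary vertex.

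For the tree structure, I would fix $S \in \mathrm{NI}(C)$ and iteratively form $S_0 = S$, $S_{i+1} = S_i \cup \{\mathrm{F}(S_i)\}$. By the third axiom of a Frobenius pseudo-variety, each $S_{i+1}$ remains in $\mathrm{NI}(C)$ as long as $S_i \neq \max(\mathrm{NI}(C))$, and $|\mathbb{N} \setminus S_i|$ strictly decreases at every step. Since this cardinality is finite, the chain must terminate at $\max(\mathrm{NI}(C))$, producing a path from $S$ to the root in $\mathrm{G}(C)$. Uniqueness of this path is automatic from the definition of the edges: given any vertex $T$ different from the root, the unique edge leaving $T$ towards the root must go to $T \cup \{\mathrm{F}(T)\}$, so the successor at each step is forced, and with it the entire path.

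For the characterisation of children, by definition $S'$ is a child of $S$ precisely when $(S',S)$ is an edge, that is, $S = S' \cup \{\mathrm{F}(S')\}$. Writing $x = \mathrm{F}(S')$, this rewrites as $S' = S \setminus \{x\}$ with $x \in S$. For $S'$ to be a submonoid, and hence a numerical semigroup, the set $S \setminus \{x\}$ must be closed under addition, which, by the fact recalled just before the theorem statement, is equivalent to $x \in \mathrm{msg}(S)$. Moreover, $x$ must coincide with $\mathrm{F}(S \setminus \{x\})$; since the integers missing from $S \setminus \{x\}$ are exactly $(\mathbb{N}\setminus S) \cup \{x\}$, this happens if and only if $x > \mathrm{F}(S)$. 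Finally, $S'$ must belong to $\mathrm{NI}(C)$. Conversely, any $x \in \mathrm{msg}(S)$ with $x > \mathrm{F}(S)$ and $S \setminus \{x\} \in \mathrm{NI}(C)$ obviously produces a valid child $S \setminus \{x\}$.

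The main obstacle is really just the careful bookkeeping verification that $\mathrm{F}(S \setminus \{x\}) = x$ is equivalent to $x > \mathrm{F}(S)$, together with the subtlety that, unlike in the Frobenius variety setting, the pseudo-variety axiom only allows us to add $\mathrm{F}(S)$ when $S \neq \max(\mathrm{NI}(C))$ (so the root need not be $\mathbb{N}$). Once these points are nailed down, everything else follows mechanically from the definition of a Frobenius pseudo-variety and the standard removal-of-a-generator fact cited just before the theorem.
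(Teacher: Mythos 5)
Your proposal is correct and follows the same route as the paper: the paper gives no proof of Theorem~\ref{thm35} at all, but derives it from Theorem~\ref{thm34} together with the general results on Frobenius pseudo-varieties cited as \cite[Lemma 12, Theorem 3]{pseudovar}, and your argument is precisely the standard proof of those general facts (genus strictly decreases along the chain $S_i\cup\{{\rm F}(S_i)\}$ until the maximum is reached; each vertex has at most one outgoing edge; removing $x\in{\rm msg}(S)$ with $x>{\rm F}(S)$ is exactly what inverts the edge map) specialized to ${\rm NI}(C)$. So you have supplied a correct, self-contained proof of what the paper imports as a black box; no gaps.
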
 
 
In the following proposition we will characterize the minimal generators $x$ of a $C$-incentive $M$ such that $M\setminus \{x\}$ is also a $C$-incentive.

\begin{proposition}\label{prop36}
Let $M$ be a $C$-incentive and $x\in {\rm msg}(M)$. Then $M\setminus \{x\}$ is a $C$-incentive if and only if $\{x\}-C \subseteq ({\mathbb Z}\setminus M) \cup {\rm msg}(M\setminus \{x\}) \cup \{x,0\}$.
\end{proposition}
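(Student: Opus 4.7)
The plan is to reduce the statement to a clean case analysis about when a given integer $y = x - c$ admits a decomposition $y = s + t$ with $s,t \in (M\setminus\{x\})\setminus\{0\}$.

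First I would observe that, since $x \in \mathrm{msg}(M)$, the set $M' := M \setminus \{x\}$ is already a submonoid of $(\mathbb{N},+)$ (this is the fact quoted just before Theorem~\ref{thm35}). So the only question is whether $M'$ satisfies the $C$-incentive condition. Pick arbitrary $s,t \in M' \setminus \{0\}$ and $c \in C$. Because $M$ is a $C$-incentive, $s+t+c \in M$ automatically, so $s+t+c \in M'$ fails only when $s+t+c = x$. Hence $M'$ is a $C$-incentive if and only if, for every $c \in C$, the integer $y := x - c$ cannot be written as $s+t$ with $s,t \in (M\setminus\{x\})\setminus\{0\}$.

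Next I would verify that this non-decomposability of $y$ is equivalent to $y \in (\mathbb{Z}\setminus M) \cup \mathrm{msg}(M\setminus\{x\}) \cup \{x,0\}$. I would do this by a four-case analysis on $y$:
\begin{enumerate}
\item If $y \notin M$ (covering $y<0$ and $y\in\mathbb{N}\setminus M$), then $y = s+t$ with $s,t\in M$ is impossible because $M$ is closed under addition.
\item If $y = 0$, then $y = s+t$ with $s,t>0$ is impossible.
\item If $y = x$, then using the characterization $\mathrm{msg}(M) = (M\setminus\{0\}) \setminus ((M\setminus\{0\}) + (M\setminus\{0\}))$ mentioned after Lemma~\ref{lem7}, the membership $x \in \mathrm{msg}(M)$ forbids any decomposition of $x$ into two nonzero elements of $M$, hence in particular into two elements of $M\setminus\{x\}$.
\item Otherwise $y \in M\setminus\{0,x\} = M'\setminus\{0\}$, and here the same minimal-generator characterization applied to $M'$ gives: $y$ cannot be decomposed into two nonzero elements of $M'$ precisely when $y \in \mathrm{msg}(M')$.
\end{enumerate}
Cases 1--3 are exactly the sets $\mathbb{Z}\setminus M$, $\{0\}$, and $\{x\}$ on the right-hand side, and case~4 contributes $\mathrm{msg}(M\setminus\{x\})$. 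Together they show that the ``no bad decomposition for this $c$'' property is equivalent to $x-c \in (\mathbb{Z}\setminus M) \cup \mathrm{msg}(M\setminus\{x\}) \cup \{x,0\}$.

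Quantifying over all $c\in C$ yields exactly $\{x\} - C \subseteq (\mathbb{Z}\setminus M) \cup \mathrm{msg}(M\setminus\{x\}) \cup \{x,0\}$, which gives both implications simultaneously. There is no genuine obstacle here; the only thing to be careful about is making sure the four cases are exhaustive and that in case~4 the equivalence ``decomposable in $M'$ iff not in $\mathrm{msg}(M')$'' is applied to $M'$ and not to $M$ (the distinction is what forces $\{x,0\}$, rather than just $\{0\}$, to appear on the right-hand side).
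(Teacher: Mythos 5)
Your proposal is correct and follows essentially the same route as the paper: both arguments rest on the observation that, since $M$ is a $C$-incentive, the only possible failure for $M\setminus\{x\}$ is $s+t+c=x$, and then translate ``$x-c$ has no decomposition as a sum of two nonzero elements of $M\setminus\{x\}$'' into membership in $({\mathbb Z}\setminus M) \cup {\rm msg}(M\setminus\{x\}) \cup \{x,0\}$ via the characterization ${\rm msg}(N)=(N\setminus\{0\})\setminus\bigl((N\setminus\{0\})+(N\setminus\{0\})\bigr)$. Your explicit four-case analysis is just a more systematic packaging of the paper's two contradiction arguments; no gap.
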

 
\begin{proof}
(Necessity.) If $x-c \notin ({\mathbb Z}\setminus M) \cup {\rm msg}(M\setminus \{x\}) \cup \{x,0\}$ for some $c\in C$, then we can assert that $x-c \in M\setminus \{x,0\}$ and $x-c \notin {\rm msg}(M\setminus \{x\})$. Therefore, $x-c=m+n$ for some $m,n \in M\setminus \{x,0\}$ and, consequently, $m+n+c=x \notin M \setminus \{x\}$. Thereby, $ M \setminus \{x\}$ is not a $C$-incentive.

(Sufficiency.) If we take $m,n \in M\setminus \{x,0\}$, then $\{m+n\} +C \subseteq M$. Let us suppose that $m+n+c=x$ for some $c\in C$. In such a case, $x-c \notin ({\mathbb Z}\setminus M) \cup {\rm msg}(M\setminus \{x\}) \cup \{x,0\}$ that is a contradiction. Thus, $m+n+c\not=x$ for all $c\in C$ and, consequently, $\{m+n\} +C \subseteq M\setminus \{x\}$.
\end{proof}

In order to facilitate the construction of the tree ${\rm G}(C)$, we will study the relation between the minimal generators of a numerical semigroup $S$ and the minimal generators of $S\setminus \{x\}$, where $x$ is a minimal generator of $S$ that is greater than ${\rm F}(S)$. First of all, let us observe that, if $S$ is minimally generated by $\{m,m+1,\ldots,2m-1\}$ (that is, $S=\{0,m,\to\}$), then $S\setminus \{m\}$ is minimally generated by $\{m+1,m+2,\ldots,2m+1\}$. In other case we will use the next result, which is a reformulation of \cite[Corollary 18]{frases}.

\begin{proposition}\label{prop37}
Let $S$ be a numerical semigroup with minimal system of generators $\left\{n_1<\ldots<n_p\right\}$. If $i \in \{2,\ldots,p\}$ and $n_i>{\rm F}(S)$, then
	$${\rm msg}(S \setminus \{n_i\})= \left\{ \begin{array}{l}
			\left\{n_1,\ldots,n_{p}\right\} \setminus \{n_i\}, \quad \mbox{if there exists } j \in \{2,\ldots,p-1\} \\ \mbox{ } \hspace{3.27cm} \mbox{such that } n_i+n_1-n_j \in S; \\[2mm]
			\left(\left\{n_1,\ldots,n_{p}\right\} \setminus \{n_i\}\right) \cup \left\{n_i+n_1\right\}, \quad \mbox{in other case.}
	\end{array} \right.$$
\end{proposition}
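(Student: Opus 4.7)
The plan is to compute ${\rm msg}(S\setminus\{n_i\})$ directly via the characterization ${\rm msg}(M)=(M\setminus\{0\})\setminus\bigl((M\setminus\{0\})+(M\setminus\{0\})\bigr)$ recalled after Lemma~\ref{lem7}, and then to decide which of the natural candidates really belong to it.

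First, every $n_j$ with $j\ne i$ remains a minimal generator of $S\setminus\{n_i\}$: since $n_j\in{\rm msg}(S)$, it cannot be written as a sum of two nonzero elements of $S$, let alone of $S\setminus\{n_i,0\}$. Thus $\{n_1,\ldots,n_p\}\setminus\{n_i\}\subseteq{\rm msg}(S\setminus\{n_i\})$. The key claim is that the only element that can be added to this set is $n_i+n_1$. Indeed, take $x\in{\rm msg}(S\setminus\{n_i\})\setminus\{n_1,\ldots,n_p\}$; since $x\notin{\rm msg}(S)$ we may write $x=u+v$ with $u,v\in S\setminus\{0\}$, and the minimality of $x$ in $S\setminus\{n_i\}$ forces $n_i\in\{u,v\}$, so $x=n_i+s$ with $s\in S\setminus\{0\}$. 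If $s>n_1$, then
\[
x-n_1=n_i+(s-n_1)>n_i>{\rm F}(S),
\]
so $x-n_1\in S$; moreover $x-n_1\ne n_i$ (because $s\ne n_1$) and $x-n_1\ne 0$. Thus $x=n_1+(x-n_1)$ is a decomposition in $S\setminus\{n_i,0\}$, contradicting minimality. Hence $s=n_1$, so $x=n_i+n_1$.

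It remains to characterize when $n_i+n_1$ is itself a minimal generator of $S\setminus\{n_i\}$: it is not iff $n_i+n_1=a+b$ for some $a,b\in S\setminus\{n_i,0\}$. I would prove that any such decomposition can be refined to one of the form $a=n_j$ with $n_j\in{\rm msg}(S)$, $j\ne 1$, $j\ne i$. If $b\in\langle n_1,n_i\rangle$, a direct case analysis on $b=\mu_1n_1+\mu_i n_i$ together with $a=(1-\mu_1)n_1+(1-\mu_i)n_i\in S\setminus\{n_i,0\}$ leads to a contradiction using $n_i\in{\rm msg}(S)$ (the only surviving possibility, $\mu_i=0$ and $\mu_1\ge 2$, would force $n_i-(\mu_1-1)n_1\in S$, making $n_i$ a sum of two positive elements of $S$). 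Hence some expression of $b$ involves $n_k$ with $k\ne 1,i$, and splitting off this $n_k$ yields the desired refinement, with $n_i+n_1-n_j\in S$ and the side conditions $b=n_i+n_1-n_j\ne n_i,0$ reducing to $j\ne 1$ and to $n_j\ne n_i+n_1$ (automatic from $n_j\in{\rm msg}(S)$). Finally, one checks that $j=p$ is automatically ruled out when $i<p$, because then $n_i+n_1-n_p<n_1$ lies in the gaps of $S$ (or is non-positive), while $j=p$ with $i=p$ is excluded by $j\ne i$. This yields the stated dichotomy.

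The main obstacle I expect is precisely this last reduction: carefully showing that an arbitrary decomposition of $n_i+n_1$ in $S\setminus\{n_i,0\}$ can always be brought to the standard form with $a=n_j$ a minimal generator distinct from $n_1$ and $n_i$, and pinning down the degenerate sub-cases where summands could collapse to $0$ or to $n_i$.
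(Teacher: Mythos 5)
Your Steps 1 and 2 are sound: every $n_j$ with $j\ne i$ survives as a minimal generator of $S\setminus\{n_i\}$, and the observation that $s>n_1$ forces $x-n_1 = n_i+(s-n_1) > n_i > {\rm F}(S)$ correctly pins down $n_i+n_1$ as the only possible extra generator. Step 3 has the right idea but is left at sketch level; the ``case analysis'' for $b\in\langle n_1,n_i\rangle$ needs to be written out (for $\mu_1\ge 2$ the equality $a=(1-\mu_1)n_1+(1-\mu_i)n_i$ is purely formal, not a factorization of $a$), and it is cleaner to argue on the smaller summand: taking $a\le b$ forces $a<n_i$, so a representation of $a$ in $\langle n_1,n_i\rangle$ must be a multiple $\mu n_1$, and $\mu\ge 2$ would give $n_i=(n_i-(\mu-1)n_1)+(\mu-1)n_1$, contradicting $n_i\in{\rm msg}(S)$; hence some generator $n_k$ with $k\ne 1,i$ occurs in a factorization of $a$.

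The genuine gap is at the end. What you derive (correctly) is that $n_i+n_1\notin{\rm msg}(S\setminus\{n_i\})$ if and only if there is some $j$ with $j\ne 1$ \emph{and} $j\ne i$ such that $n_1+n_i-n_j\in S$; you then drop $j=p$ and conclude ``this yields the stated dichotomy.'' But the condition printed in the proposition lets $j$ range over all of $\{2,\ldots,p-1\}$ without excluding $j=i$, and for $i<p$ these are not equivalent: $j=i$ always gives $n_1+n_i-n_j=n_1\in S$, so the first branch would fire for every $i\in\{2,\ldots,p-1\}$ with $n_i>{\rm F}(S)$. That is false. For $S=\langle 3,5,7\rangle$ and $i=2$ (so $p=3$, $n_2=5>{\rm F}(S)=4$) the only candidate $j\in\{2,\ldots,p-1\}=\{2\}$ equals $i$, yet ${\rm msg}(S\setminus\{5\})=\{3,7,8\}$, which is the \emph{second} branch --- exactly the computation used in the paper's tree example. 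So what you have proved is a corrected version of the proposition; you must either state that the condition is to be read with the implicit restriction $j\ne i$, or flag that the literal formula misbehaves for $i<p$. As written, ``This yields the stated dichotomy'' glosses over this mismatch rather than resolving it.
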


Let $S$ be a numerical $C$-incentive, $m=\min\left({\rm msg}(S)\right)$, and $x\in {\rm msg}(S)$. From Proposition~\ref{prop37}, if $(\{x\}-C) \cap \left( {\rm msg}(M\setminus \{x\}) \setminus {\rm msg}(M) \right) \not= \emptyset$, then $-m \in C$. On the other hand, $x \in \{x\}-C$ if and only if $0 \in C$. These two facts allow us to give the following improvement of Proposition~\ref{prop36} (see Remark~\ref{rem38}).

\begin{proposition}\label{prop36b}
Let $S$ be a numerical $C$-incentive, $m=\min\left({\rm msg}(S)\right)$, and $x\in {\rm msg}(S)$. Let us suppose that $-m \not\in C$. Then $S \setminus \{x\}$ is a numerical $C$-incentive if and only if $\{x\}-C \subseteq ({\mathbb Z}\setminus S) \cup {\rm msg}(S) \cup \{x\}$.
\end{proposition}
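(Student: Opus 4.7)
The plan is to derive Proposition~\ref{prop36b} from Proposition~\ref{prop36}, using Proposition~\ref{prop37} and the hypothesis $-m\not\in C$ to rewrite the right-hand side. Since $S$ is numerical and $x$ is a minimal generator, $S\setminus\{x\}$ is still a cofinite submonoid of $(\mathbb{N},+)$, so the ``numerical'' qualifier in the conclusion is automatic. The problem therefore reduces to proving that, under $-m\not\in C$,
$$\{x\}-C \subseteq ({\mathbb Z}\setminus S)\cup{\rm msg}(S\setminus\{x\})\cup\{x,0\} \;\Longleftrightarrow\; \{x\}-C \subseteq ({\mathbb Z}\setminus S)\cup{\rm msg}(S)\cup\{x\},$$
where the left-hand inclusion is the characterization from Proposition~\ref{prop36}.

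For the easy implication $(\Leftarrow)$, I will invoke Proposition~\ref{prop37}, which gives ${\rm msg}(S)\setminus\{x\}\subseteq {\rm msg}(S\setminus\{x\})$; consequently $({\mathbb Z}\setminus S)\cup{\rm msg}(S)\cup\{x\}$ is contained in $({\mathbb Z}\setminus S)\cup{\rm msg}(S\setminus\{x\})\cup\{x,0\}$, and the implication is immediate. For $(\Rightarrow)$, I fix $c\in C$, set $y=x-c$, and identify in which piece of the Proposition~\ref{prop36} right-hand side the element $y$ lies. If $y\in{\mathbb Z}\setminus S$ or $y=x$, then $y$ already belongs to the Proposition~\ref{prop36b} target. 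If $y\in {\rm msg}(S\setminus\{x\})$, Proposition~\ref{prop37} forces $y\in {\rm msg}(S)\setminus\{x\}\subseteq {\rm msg}(S)$ (and we are done) or $y=x+m$; in the latter case $c=-m$, contradicting the hypothesis.

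The main obstacle is the remaining case $y=0$, corresponding to $c=x\in C$, for which $0$ does not lie in the Proposition~\ref{prop36b} right-hand side. I expect this case to be handled by combining the two bookkeeping remarks recorded just before the statement — ``$x+m\in\{x\}-C\iff -m\in C$'' and ``$x\in\{x\}-C\iff 0\in C$'' — with Proposition~\ref{prop6}, which lets us assume $0\not\in C$ throughout, and with Remark~\ref{rem38} referenced by the authors; these observations together isolate and rule out the $y=0$ configuration, or absorb it into one of the cases already treated. Once this delicate point is cleared, the rest of the proof is a direct transcription of Propositions~\ref{prop36} and~\ref{prop37}.
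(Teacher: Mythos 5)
Your reduction to comparing the two right-hand sides is exactly the paper's intended route (the authors offer no formal proof beyond the two observations you quote), and your handling of the cases $y\in{\mathbb Z}\setminus S$, $y=x$ and $y\in{\rm msg}(S\setminus\{x\})$ is correct: Proposition~\ref{prop37} confines the new minimal generators to $x+n_1=x+m$, and $x-c=x+m$ forces $c=-m$, excluded by hypothesis. The genuine gap is the case $y=0$, and the tools you propose for it do not reach it: Proposition~\ref{prop6} and the remark ``$x\in\{x\}-C$ if and only if $0\in C$'' concern the element $c=0$ of $C$ (equivalently $y=x$), whereas $y=0$ arises from $c=x\in C$, which none of the cited facts address. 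Worse, this case cannot be ``ruled out or absorbed'': take $S=\langle 2,3\rangle$, $C=\{3\}$, $x=3$. Then $m=2$ and $-m\notin C$; $S$ is a numerical $\{3\}$-incentive, and $S\setminus\{3\}=\langle 2,5\rangle$ is again a numerical $\{3\}$-incentive (by Proposition~\ref{prop8}, since $\{4,7,10\}+\{3\}\subseteq\langle 2,5\rangle$); yet $\{x\}-C=\{0\}$ and $0\notin({\mathbb Z}\setminus S)\cup{\rm msg}(S)\cup\{x\}$. So the ``only if'' direction of the statement, as printed, fails whenever $x\in C$ and $S\setminus\{x\}$ is a $C$-incentive; the equivalence only holds if $0$ is kept on the right-hand side (as in Proposition~\ref{prop36}) or the extra hypothesis $x\notin C$ is imposed. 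Your instinct that $y=0$ is the delicate point was exactly right; what is missing is the recognition that the obstacle lies in the statement itself, not in your argument, and no amount of bookkeeping will close it.

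A secondary caveat: in the direction you do prove, the containment ${\rm msg}(S\setminus\{x\})\subseteq\left({\rm msg}(S)\setminus\{x\}\right)\cup\{x+m\}$ is only guaranteed by Proposition~\ref{prop37} when $x\neq n_1$ and $x>{\rm F}(S)$; for $x=n_1$ the set of new minimal generators can be strictly larger (for instance ${\rm msg}(\langle 2,3\rangle\setminus\{2\})=\{3,4,5\}$, which contains $5\neq x+m$), so that case requires the separate description given just before Proposition~\ref{prop37} or an explicit restriction on which generators are removed. The converse implication is unaffected, since ${\rm msg}(S)\setminus\{x\}\subseteq{\rm msg}(S\setminus\{x\})$ holds for elementary reasons without invoking Proposition~\ref{prop37}.
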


\begin{remark}\label{rem38}
Observe that, by applying Proposition~\ref{prop36}, we have to compute ${\rm msg}(S\setminus \{x\})$ in order to assert that $S\setminus \{x\}$ is a numerical $C$-incentive. That is, firstly we compute and secondly we assert. However, by Proposition~\ref{prop36b}, we have only to use ${\rm msg}(S)$. Of course, if we want to build the tree, we will have to compute ${\rm msg}(S\setminus \{x\})$. Now, firstly we assert and secondly we compute. 
\end{remark}

Let us see an example that illustrates the contents of this section.

\begin{example}\label{exmp38}
We are going to build the tree associated to the numerical $\{-3,2\}$-incentives.
\begin{center}
\begin{picture}(205,130)
\put(123,120){$\langle 3,4,5 \rangle$}
\put(132,115){\line(-2,-1){30}} \put(143,115){\line(2,-1){30}}
\put(65,90){$\langle 4,5,6,7 \rangle$} \put(168,90){$\langle 3,5,7 \rangle$}
\put(85,85){\line(0,-1){15}} \put(184,85){\line(0,-1){15}}
\put(59,60){$\langle 5,6,7,8,9 \rangle$} \put(168,60){$\langle 3,7,8 \rangle$}
\put(73,55){\line(-2,-3){11}} \put(97,55){\line(2,-3){11}} \put(184,55){\line(0,-1){15}}
\put(8,30){$\langle 6,7,8,9,10,11 \rangle$} \put(95,30){$\langle 5,7,8,9,11 \rangle$} \put(166,30){$\langle 3,8,10 \rangle$}
\put(32,25){\line(-2,-3){11}} \put(41,25){\line(0,-1){15}} \put(50,25){\line(2,-3){11}} \put(123,25){\line(0,-1){15}} \put(184,25){\line(0,-1){15}}
\put(15,0){\ldots} \put(29,0){\ldots} \put(43,0){\ldots} \put(57,0){\ldots} \put(94,0){$\langle 5,7,9,11,13 \rangle$} \put(167,0){$\langle 3,8,13 \rangle$}
\end{picture}
\end{center}

\medskip

By Theorem~\ref{thm34}, we know that $\max({\rm NI}(\{-3,2\}))=\{0,3,\to\} = \langle 3,4,5 \rangle$.  By applying Theorem~\ref{thm35} and Propositions~\ref{prop36}, \ref{prop37}, and \ref{prop36b} (in fact, we will apply Proposition~\ref{prop36} only when $\min\left({\rm msg}(S)\right)=3$), we have that
\begin{itemize}
\item $\langle 4,5,6,7 \rangle = \langle 3,4,5 \rangle \setminus \{3\}$ and $\langle 3,5,7 \rangle = \langle 3,4,5 \rangle \setminus \{4\}$ are the two children of $\langle 3,4,5 \rangle$.
\item $\langle 5,6,7,8,9 \rangle = \langle 4,5,6,7 \rangle \setminus \{4\}$ is the unique child of $\langle 4,5,6,7 \rangle$.
\item $\langle 3,7,8 \rangle = \langle 3,5,7 \rangle \setminus \{5\}$ is the unique child of $\langle 3,5,7 \rangle$.
\item $\langle 6,7,8,9,10,11 \rangle = \langle 5,6,7,8,9 \rangle \setminus \{5\}$ and $\langle 5,7,8,9,11 \rangle = \langle 5,6,7,8,9 \rangle \setminus \{6\}$ are the two children of $\langle 5,6,7,8,9 \rangle$.
\item $\langle 3,8,10 \rangle = \langle 3,7,8 \rangle \setminus \{7\}$ is the unique child of $\langle 3,7,8 \rangle$.
\item $\langle 6,7,8,9,10,11 \rangle$ has three children.
\item $\langle 5,7,9,11,13 \rangle = \langle 5,7,8,9,11 \rangle \setminus \{8\}$ is the unique child of $\langle 5,7,8,9,11 \rangle$.
\item $\langle 3,8,13 \rangle = \langle 3,8,10 \rangle \setminus \{10\}$ is the unique child of $\langle 3,8,10 \rangle$.
\item $\langle 5,7,9,11,13 \rangle$ has not got any child.
\item $\langle 3,8,13 \rangle$ has not got any child.
\item And so on.
\end{itemize}
\end{example}

\begin{remark}
In Example~\ref{exmp38} we have an infinite tree, that is, a tree with infinitely many elements. For instance, the branch $\langle 3,4,5 \rangle$, $\langle 4,5,6,7 \rangle$, $\langle 5,6,7,8,9, \rangle$, $\langle 6,7,8,9,10,11 \rangle$, $\ldots$ has no end. However, if we only take into account numerical $C$-incentives with Frobenius number (or genus) less than or equal to a fixed number, then we are going to obtain trees with finitely many elements. (Remind that, if $S$ is a numerical $C$-incentive, then the genus of $S$ is equal to the cardinality of ${\mathbb N} \setminus S$.)
\end{remark}

\section{$C$-incentives containing a given $C$-admissible set}\label{suggestion}

Let $X$ be a subset of ${\mathbb N} \setminus \{0\}$ such that it is $C$-admissible. We denote by ${\rm I}(C,X) = \{ M \in {\rm I}(C) \mid X \subseteq M \}$ and by ${\rm NI}(C,X) = \{ S \in {\rm NI}(C) \mid X \subseteq S \}$. In this section, our main purpose will be to show an algorithmic process that will allow us to compute ${\rm I}(C,X)$. In order to do that, and first of all, we will see that we can focus on the computation of ${\rm NI}(C,X)$.

\begin{lemma}\label{lem71}
If $M \in {\rm I}(C,X)$, then $\gcd(M)$ divides $\gcd(C \cup X)$.
\end{lemma}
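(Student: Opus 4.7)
The proof will be a direct combination of Lemma~\ref{lem23} with the trivial observation that a common divisor of every element of a containing monoid is also a common divisor of every element of the subset.

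The plan is as follows. First, I would dispose of the degenerate case $M=\{0\}$: if $X$ is non-empty this is incompatible with $X\subseteq M\subseteq {\mathbb N}\setminus\{0\}$, and if $X=\emptyset$ the statement amounts to saying that $\gcd(\{0\})=0$ divides $\gcd(C)$, which holds under the standard convention. So we may assume $M\neq\{0\}$. Then Lemma~\ref{lem23} immediately yields $\gcd(M)\mid \gcd(C)$.

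Next, since $X\subseteq M$, every element $x\in X$ is a multiple of $\gcd(M)$, so $\gcd(M)$ divides $\gcd(X)$ as well. Combining both divisibilities, $\gcd(M)$ is a common divisor of $\gcd(C)$ and $\gcd(X)$, hence $\gcd(M)\mid \gcd(\gcd(C),\gcd(X))=\gcd(C\cup X)$, which is the desired conclusion.

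There is no real obstacle here; the only thing to be careful about is the edge case $M=\{0\}$ (since Lemma~\ref{lem23} explicitly excludes it) and the convention for $\gcd$ of the empty set. Once those are addressed, the proof is a one-line application of Lemma~\ref{lem23} plus the elementary fact about $\gcd$ of a subset.
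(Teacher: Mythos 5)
Your proof is correct and is essentially the paper's own argument: apply Lemma~\ref{lem23} to get $\gcd(M)\mid\gcd(C)$, observe $X\subseteq M$ gives $\gcd(M)\mid\gcd(X)$, and combine. Your extra care with the degenerate case $M=\{0\}$ is a reasonable precaution the paper omits, but it does not change the substance.
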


\begin{proof}
By Lemma~\ref{lem23}, we know that $\gcd(M)$ divides $\gcd(C)$. In addition, since $X \subseteq M$, we have that $\gcd(M)$ divides $\gcd(X)$. Thus, $\gcd(M)$ divides $\gcd(C \cup X)$.
\end{proof}

As a consequence of the previous lemma, we have the following one.

\begin{lemma}\label{lem72}
If $\gcd(C \cup X)=1$, then ${\rm I}(C,X) = {\rm NI}(C,X)$.
\end{lemma}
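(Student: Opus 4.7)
The plan is to prove the two inclusions of the equality ${\rm I}(C,X) = {\rm NI}(C,X)$ separately, with the nontrivial direction being an immediate application of Lemma~\ref{lem71}.

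The inclusion ${\rm NI}(C,X) \subseteq {\rm I}(C,X)$ is purely by unfolding definitions: every numerical $C$-incentive containing $X$ is in particular a $C$-incentive containing $X$.

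For the reverse inclusion, I would take an arbitrary $M \in {\rm I}(C,X)$ and show that $M$ is numerical. First I would observe that $M \neq \{0\}$: since $X \subseteq {\mathbb N} \setminus \{0\}$ is $C$-admissible (and thus in particular non-empty in the only case that matters), the containment $X \subseteq M$ forces $M$ to have a positive element. This is the hypothesis needed to invoke Lemma~\ref{lem71}, which yields that $\gcd(M)$ divides $\gcd(C \cup X)$. Combining this with the assumption $\gcd(C \cup X) = 1$ gives $\gcd(M) = 1$, which is precisely the definition of $M$ being a numerical $C$-incentive, so $M \in {\rm NI}(C,X)$.

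There is no real obstacle here: the argument is a one-line consequence of Lemma~\ref{lem71} together with the observation that any $M \in {\rm I}(C,X)$ is forced to be nontrivial because $X$ contains a positive integer. The only subtlety worth flagging is the appeal to non-emptiness of $X$ (so that $M \neq \{0\}$ and Lemma~\ref{lem71} is applicable), which is why the proof is stated for $X \subseteq {\mathbb N} \setminus \{0\}$ rather than for arbitrary subsets of ${\mathbb N}$.
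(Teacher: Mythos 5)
Your proof is correct and is exactly the argument the paper intends (the paper states the lemma as an immediate consequence of Lemma~\ref{lem71} without writing out the details). Your remark that non-emptiness of $X$ is needed to guarantee $M\neq\{0\}$ before invoking Lemma~\ref{lem71} is a worthwhile point, since for $X=\emptyset$ the monoid $\{0\}$ would belong to ${\rm I}(C,X)$ but not to ${\rm NI}(C,X)$.
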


Let $M$ be a submonoid of $({\mathbb N},+)$ such that $M \not= \{0\}$ and let $d=\gcd(M)$. In such a situation, from Lemma~\ref{lem24}, we have that $M \in {\rm I}(C)$ if and only if $\frac{M}{d} \in {\rm I}\left(\frac{C}{d}\right)$. Moreover, it is clear that $X \subseteq M$ if and only if $\frac{X}{d} \subseteq \frac{M}{d}$. In this way, we can establish the next result.

\begin{lemma}\label{lem73}
Let $M$ be a submonoid of $({\mathbb N},+)$ such that $M \not= \{0\}$ and let $d=\gcd(M)$. Then $M \in {\rm I}(C,X)$ if and only if $\frac{M}{d} \in {\rm I}\left(\frac{C}{d}, \frac{X}{d}\right)$.
\end{lemma}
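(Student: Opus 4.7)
The plan is to split $M \in {\rm I}(C,X)$ into its two defining conditions, namely ``$M$ is a $C$-incentive'' and ``$X \subseteq M$'', and then translate each of these separately into the corresponding condition for $\frac{M}{d}$, $\frac{C}{d}$, $\frac{X}{d}$. The whole statement should then drop out by conjunction.

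First I would handle the $C$-incentive condition: this is precisely Lemma~\ref{lem24}, which gives $M \in {\rm I}(C)$ iff $\frac{M}{d} \in {\rm I}\!\left(\frac{C}{d}\right)$. Before applying it I have to make sure that $\frac{C}{d}$ is a genuine subset of ${\mathbb Z}$, that is, that $d$ divides every element of $C$; but this is exactly what Lemma~\ref{lem23} provides, since $M \ne \{0\}$ implies $d = \gcd(M) \mid \gcd(C)$. Next I would handle the containment: the map $z \mapsto dz$ is a bijection between ${\mathbb Z}$ and $d{\mathbb Z}$, and since $X \subseteq M \subseteq d{\mathbb N}$ we have $d \mid x$ for every $x \in X$, so $\frac{X}{d}$ is a well-defined subset of ${\mathbb N}$ and $X \subseteq M \Longleftrightarrow \frac{X}{d} \subseteq \frac{M}{d}$ is immediate.

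Combining both equivalences yields $M \in {\rm I}(C,X)$ iff $\frac{M}{d} \in {\rm I}\!\left(\frac{C}{d}, \frac{X}{d}\right)$, which is the claim. In the reverse direction, starting from $\frac{M}{d} \in {\rm I}\!\left(\frac{C}{d}, \frac{X}{d}\right)$, the divisibility of each $c \in C$ and each $x \in X$ by $d$ is already built into the fact that $\frac{C}{d}$ and $\frac{X}{d}$ are presented as integer sets, so no further check is needed before reversing Lemma~\ref{lem24} and the containment bijection.

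There is no real obstacle: the only mildly delicate point is checking that $\frac{C}{d}$ and $\frac{X}{d}$ are meaningful as subsets of ${\mathbb Z}$ in each direction, and both checks reduce to Lemma~\ref{lem23} together with $X \subseteq M$. The proof is therefore essentially a bookkeeping assembly of Lemma~\ref{lem23} and Lemma~\ref{lem24}, as the paragraph preceding the statement already anticipates.
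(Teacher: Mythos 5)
Your proposal is correct and follows essentially the same route as the paper, which likewise combines Lemma~\ref{lem24} for the incentive condition with the immediate equivalence $X \subseteq M \Leftrightarrow \frac{X}{d} \subseteq \frac{M}{d}$ (the paper presents this in the paragraph preceding the lemma rather than in a formal proof). Your extra care about the well-definedness of $\frac{C}{d}$ via Lemma~\ref{lem23} is a reasonable addition but does not change the argument.
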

 
Now, from Lemmas~\ref{lem71}, \ref{lem72}, and \ref{lem73}, we have the following result.

\begin{theorem}\label{thm74}
Let $D$ the set formed by all positive divisors of $\gcd(C \cup X)$. Then $${\rm I}(C,X) = \bigcup_{d \in D} \left\{ dS \mid S \in {\rm NI}\left(\frac{C}{d}, \frac{X}{d} \right) \right\}.$$
\end{theorem}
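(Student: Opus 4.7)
The plan is to prove the equality by double inclusion, with the three preceding lemmas doing essentially all the work. Since $X \subseteq \mathbb{N} \setminus \{0\}$ is (implicitly) nonempty, every $M \in {\rm I}(C,X)$ satisfies $M \neq \{0\}$, so the quantity $d := \gcd(M)$ is a well-defined positive integer. This is exactly what is needed for Lemma~\ref{lem73} to be applicable.

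For the inclusion $\subseteq$, I would take $M \in {\rm I}(C,X)$ and set $d = \gcd(M)$. Lemma~\ref{lem71} gives $d \mid \gcd(C \cup X)$, hence $d \in D$. Lemma~\ref{lem73} then transfers $M$ to $\frac{M}{d} \in {\rm I}\!\left(\frac{C}{d}, \frac{X}{d}\right)$, and by construction $\gcd\!\left(\frac{M}{d}\right) = 1$, so $\frac{M}{d}$ is numerical. Thus $\frac{M}{d} \in {\rm NI}\!\left(\frac{C}{d}, \frac{X}{d}\right)$, and $M = d \cdot \frac{M}{d}$ lies in the displayed union.

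For the reverse inclusion, I would take $d \in D$ and $S \in {\rm NI}\!\left(\frac{C}{d}, \frac{X}{d}\right)$. Since $d \mid \gcd(C)$ and $d \mid \gcd(X)$, the sets $\frac{C}{d}$ and $\frac{X}{d}$ consist of integers, and $S$ is a numerical semigroup containing $\frac{X}{d}$, so $dS$ is a submonoid of $(\mathbb{N}, +)$ with $\gcd(dS) = d$ and $X = d \cdot \frac{X}{d} \subseteq dS$. Since $\frac{dS}{d} = S \in {\rm I}\!\left(\frac{C}{d}, \frac{X}{d}\right)$, Lemma~\ref{lem73} (applied in the opposite direction) yields $dS \in {\rm I}(C,X)$, completing the proof.

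There is no real obstacle here: the content of the theorem is already packaged inside Lemmas~\ref{lem71} and \ref{lem73}, and Lemma~\ref{lem72} is not even needed explicitly once one observes that $\gcd(M/d) = 1$ forces $M/d$ to be numerical. The only subtlety worth flagging is that $M = \{0\}$ never arises when $X \neq \emptyset$, which is precisely why the right-hand union captures all of ${\rm I}(C,X)$ without having to add $\{\{0\}\}$ separately (in contrast with Theorem~\ref{thm25}).
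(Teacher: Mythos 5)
Your proof is correct and follows exactly the route the paper intends: the paper states Theorem~\ref{thm74} as an immediate consequence of Lemmas~\ref{lem71}, \ref{lem72}, and \ref{lem73}, and your double-inclusion argument is precisely the spelled-out version of that deduction (your remark that Lemma~\ref{lem72} is subsumed by the observation $\gcd(M/d)=1$ is accurate, since that lemma is just the case $d=1$). The side observations — that $X\neq\emptyset$ with $X\subseteq{\mathbb N}\setminus\{0\}$ rules out $M=\{0\}$, and that $\gcd(dS)=d$ — are exactly the points needed to invoke Lemma~\ref{lem73} in both directions.
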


On the other hand, from Theorem~\ref{thm34}, we easily conclude the next one.

\begin{theorem}\label{thm75}
${\rm NI}(C,X)$ is a Frobenius pseudo-variety. Moreover,
	$$\max({\rm NI}(C,X))= \left\{ \begin{array}{l} {\mathbb N}, \; \mbox{ if } C \subseteq \{-2,-1\} \cup {\mathbb N}, \\[1mm] \{0,\theta(C),\to\}, \; \mbox{ in other case. } \end{array} \right.$$
\end{theorem}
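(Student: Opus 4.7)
The plan is to verify the three defining conditions of a Frobenius pseudo-variety for ${\rm NI}(C,X)$ by transferring them directly from Theorem~\ref{thm34}, together with the trivial observation that the relation ``$X \subseteq \cdot\,$'' is preserved by intersection and by enlarging a set. Thus I would treat ${\rm NI}(C,X)$ as a filter inside the pseudo-variety ${\rm NI}(C)$ and show that its maximum coincides with $\max({\rm NI}(C))$.

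First I would identify the candidate maximum. Set $M = \max({\rm NI}(C))$; by Theorem~\ref{thm34}, $M$ is either $\mathbb{N}$ or $\{0,\theta(C),\to\}$. In the first case $X \subseteq \mathbb{N}$ is automatic. In the second case $\theta(C) \geq 3$, so $\langle \theta(C)/2 \rangle$ is not numerical; hence by Lemma~\ref{lem12} every numerical $C$-incentive is contained in $\{0,\theta(C),\to\} = M$. In particular, whenever ${\rm NI}(C,X)$ is nonempty (as is implicit in the statement, since being a Frobenius pseudo-variety requires nonemptiness), any $S \in {\rm NI}(C,X)$ forces $X \subseteq S \subseteq M$. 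This proves $M \in {\rm NI}(C,X)$ and, being the maximum element of the larger family ${\rm NI}(C)$, it is \emph{a fortiori} the maximum of ${\rm NI}(C,X)$. This simultaneously establishes the second claim of the theorem.

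Next I would verify the remaining two axioms of a Frobenius pseudo-variety. Closure under intersection is immediate: if $S,T \in {\rm NI}(C,X)$, then $S \cap T \in {\rm NI}(C)$ by Theorem~\ref{thm34}, and $X \subseteq S \cap T$ since $X$ is contained in both. For the Frobenius-closure condition, suppose $S \in {\rm NI}(C,X)$ with $S \neq \max({\rm NI}(C,X)) = M$. Then $S \neq \max({\rm NI}(C))$, so Theorem~\ref{thm34} yields $S \cup \{{\rm F}(S)\} \in {\rm NI}(C)$; moreover $X \subseteq S \subseteq S \cup \{{\rm F}(S)\}$, hence $S \cup \{{\rm F}(S)\} \in {\rm NI}(C,X)$.

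I do not expect any substantive obstacle. The only delicate point is the verification that $M \supseteq X$ in the case $M = \{0,\theta(C),\to\}$, which is handled via Lemma~\ref{lem12} together with the standing hypothesis that $X$ is $C$-admissible (and the fact that, by Theorem~\ref{thm16}, the only ``exceptional'' numerical $C$-incentive outside of $M$ would be $\langle \theta(C)/2 \rangle = \mathbb{N}$, which occurs precisely when $M = \mathbb{N}$). Everything else is a purely formal transfer from Theorem~\ref{thm34}, which is why the author can say ``we easily conclude.''
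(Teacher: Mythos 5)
Your proof is correct and is exactly the routine transfer from Theorem~\ref{thm34} that the paper's ``we easily conclude'' gestures at (the paper supplies no further argument): the maximum, intersection-closure, and Frobenius-closure axioms all descend to the subfamily because $X\subseteq\cdot$ is preserved under intersection and enlargement. You have also correctly isolated the one genuinely delicate point, namely that $X\subseteq\max({\rm NI}(C))$, equivalently that ${\rm NI}(C,X)\neq\emptyset$ — a point the paper's hypothesis that $X$ is merely $C$-admissible does not quite guarantee in the degenerate case $X\not\subseteq\{0,\theta(C),\to\}$ (e.g.\ $C=\{-4,6\}$, $X=\{2,8\}$), so your explicit nonemptiness caveat is warranted.
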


Now, as is expected, we define the graph $G(C,X)$ in the following way.
\begin{itemize}
	\item ${\rm NI}(C,X)$ is the set of vertices of ${\rm G}(C,X)$;
	\item $(S,S')\in {\rm NI}(C,X)\times {\rm NI}(C,X)$ is an edge of ${\rm G}(C,X)$ if $S'=S\cup\{{\rm F}(S)\}$.
\end{itemize}

The next result is a direct consequence of Theorem~\ref{thm35}.

\begin{theorem}\label{thm76}
The graph ${\rm G}(C,X)$ is a tree with root equal to $\max({\rm NI}(C,X))$. Moreover, the children of a vertex $S \in {\rm NI}(C,X)$ are the elements of the set
$$\left\{ S \setminus \{a\} \mid a \in {\rm msg}(S), \; a > {\rm F}(S), S \setminus \{a\} \in {\rm NI}(C), \mbox{ and } a\not\in X \right\}.$$
\end{theorem}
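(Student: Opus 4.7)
The plan is to apply Theorem~\ref{thm35} to the Frobenius pseudo-variety ${\rm NI}(C,X)$ (which is indeed a Frobenius pseudo-variety by Theorem~\ref{thm75}) and then translate the extra condition $X \subseteq S$ into a constraint on which minimal generators can be removed.

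First I would note that, since ${\rm NI}(C,X)$ is a Frobenius pseudo-variety with maximum $\max({\rm NI}(C,X))$, the general results of \cite{pseudovar} (which also underlie Theorem~\ref{thm35}) guarantee that ${\rm G}(C,X)$ is a tree rooted at $\max({\rm NI}(C,X))$, with parent map $S \mapsto S \cup \{{\rm F}(S)\}$. Thus I only need to characterize the children of a given vertex $S \in {\rm NI}(C,X)$.

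Next, I would argue that $S'$ is a child of $S$ in ${\rm G}(C,X)$ if and only if $S' \in {\rm NI}(C,X)$ and $S = S' \cup \{{\rm F}(S')\}$. Setting $a = {\rm F}(S')$, this forces $S' = S \setminus \{a\}$ with $a \in S$. For $S \setminus \{a\}$ to be a submonoid of $({\mathbb N},+)$, the comment before Theorem~\ref{thm35} gives $a \in {\rm msg}(S)$. For $a$ to equal ${\rm F}(S \setminus \{a\})$, one needs $a$ to exceed every element of ${\mathbb N} \setminus S$, i.e.\ $a > {\rm F}(S)$. These two conditions together with $S \setminus \{a\} \in {\rm NI}(C)$ are exactly the conditions that Theorem~\ref{thm35} imposes for $S \setminus \{a\}$ to be a child of $S$ in ${\rm G}(C)$.

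Finally, I would incorporate the containment condition $X \subseteq S \setminus \{a\}$. Since $S \in {\rm NI}(C,X)$ already satisfies $X \subseteq S$, the inclusion $X \subseteq S \setminus \{a\}$ is equivalent to $a \notin X$. Combining this with the three conditions produced by Theorem~\ref{thm35} gives exactly the description in the statement. The main (and essentially only) potential obstacle is the reduction of $X \subseteq S \setminus \{a\}$ to $a \notin X$, but this is immediate from $X \subseteq S$; no further computation is required.
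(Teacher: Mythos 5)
Your proposal is correct and follows essentially the same route as the paper, which states Theorem~\ref{thm76} as a direct consequence of Theorem~\ref{thm35} applied to the Frobenius pseudo-variety ${\rm NI}(C,X)$ of Theorem~\ref{thm75}. Your only added step, reducing $X \subseteq S \setminus \{a\}$ to $a \notin X$ because $X \subseteq S$, is exactly the observation that justifies the extra condition in the statement.
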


By combining Theorems~\ref{thm75}, \ref{thm76}, and Propositions~\ref{prop36}, \ref{prop37}, and \ref{prop36b}, we can recursively build the tree $G(C,X)$ as shown in the next example.

\begin{example}\label{exmp77}
The tree associated to the numerical $\{-3,2\}$-incentives containing the set $\{5\}$ is the following one.
\begin{center}
\begin{picture}(155,130)
\put(73,120){$\langle 3,4,5 \rangle$}
\put(82,115){\line(-2,-1){30}} \put(93,115){\line(2,-1){30}}
\put(16,90){$\langle 4,5,6,7 \rangle$} \put(118,90){$\langle 3,5,7 \rangle$}
\put(36,85){\line(0,-1){15}}
\put(12,60){$\langle 5,6,7,8,9 \rangle$}
\put(36,55){\line(0,-1){15}}
\put(10,30){$\langle 5,7,8,9,11 \rangle$}
\put(36,25){\line(0,-1){15}}
\put(8,0){$\langle 5,7,9,11,13 \rangle$}
\end{picture}
\end{center}
In order to justify it, we can review the computations of Example~\ref{exmp38}.
\end{example}

In the previous example, we have obtained a finite tree. Indeed, this fact can be characterized in terms of $C$ and $X$.

\begin{theorem}\label{thm78}
${\rm NI}(C,X)$ is finite if and only if $\gcd(C \cup X)=1$.
\end{theorem}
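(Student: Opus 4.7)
The plan is to prove the two implications separately. For sufficiency, assume $\gcd(C\cup X)=1$. I would first argue that $X\subseteq\{\theta(C),\to\}$ (since the second case of Proposition~\ref{prop14} with $\theta(C)/2\geq 2$ would contradict the gcd hypothesis, leaving only the first case or a harmless sub-case where ${\rm L}_C(X)={\mathbb N}$), and then apply Proposition~\ref{prop21} to conclude that ${\rm L}_C(X)$ is a numerical semigroup. Consequently ${\mathbb N}\setminus{\rm L}_C(X)$ is finite, and since every $M\in{\rm NI}(C,X)$ satisfies ${\rm L}_C(X)\subseteq M\subseteq{\mathbb N}$, only finitely many candidates for $M$ exist, proving ${\rm NI}(C,X)$ finite.

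For necessity, put $d=\gcd(C\cup X)\geq 2$ and let $M^{*}=\max({\rm NI}(C,X))$ from Theorem~\ref{thm75}. The key facts to pin down first are: by the proof of Proposition~\ref{prop21}, ${\rm L}_C(X)\setminus\{0\}\subseteq d{\mathbb N}\cap\{\theta(C),\to\}$; and since $M^{*}$ is numerical, there exists $N_{0}$ with $\{n\in{\mathbb N}\mid n\geq N_{0}\}\subseteq M^{*}$. For each integer $N\geq\max(\theta(C),N_{0})$, I would define
\[ M_{N}\;=\;{\rm L}_C(X)\;\cup\;\bigl(M^{*}\cap\{n\in{\mathbb N}\mid n\geq N\}\bigr) \]
and aim to show that the family $\{M_{N}\}$ provides infinitely many distinct numerical $C$-incentives containing $X$.

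Membership $M_{N}\in{\rm NI}(C,X)$ splits into four checks: $X\subseteq{\rm L}_C(X)\subseteq M_{N}$; numericality follows from $M_{N}\supseteq\{n\mid n\geq N\}$; and the submonoid and $C$-incentive properties reduce to case analysis on whether $a,b\in M_{N}\setminus\{0\}$ lie in ${\rm L}_C(X)$ or in the tail $M^{*}\cap[N,\infty)$, invoking the bounds $a,b\geq\theta(C)$ in the former and $a,b\geq N$ in the latter, together with $N\geq\theta(C)$, to force $a+b+c\geq N$ for every $c\in C$ in the mixed and tail cases (the pure ${\rm L}_C(X)$ case is immediate from ${\rm L}_C(X)$ being a $C$-incentive). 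For distinctness, any $N\geq N_{0}$ with $d\nmid N$ satisfies $N\in M^{*}\setminus{\rm L}_C(X)$, whence $N\in M_{N}\setminus M_{N+1}$; since $d\geq 2$, infinitely many such $N$ exist.

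The main obstacle will be the $C$-incentive verification for $M_{N}$: one must carefully track the three cases for $a+b+c$ and use precisely the inequalities $\min({\rm L}_C(X)\setminus\{0\})\geq\theta(C)$ and $N\geq\theta(C)$ to dominate the most negative term $c=-\theta(C)$, also confirming $a+b+c\in M^{*}$ via the incentive property of $M^{*}$. A secondary point worth flagging is that the statement implicitly requires ${\rm NI}(C,X)\neq\emptyset$ for $M^{*}$ to exist; this is guaranteed under the usual hypothesis $X\subseteq\{0,\theta(C),\to\}$ coming from Proposition~\ref{prop14}, since then either $\{0,\theta(C),\to\}$ or ${\mathbb N}$ itself lies in ${\rm NI}(C,X)$.
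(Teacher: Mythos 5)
Your proposal is correct and follows essentially the same route as the paper: for sufficiency, both arguments reduce to the fact that ${\rm L}_C(X)$ is a numerical semigroup with finite complement in ${\mathbb N}$, so only finitely many supersets are possible; for necessity, your sets $M_N={\rm L}_C(X)\cup\bigl(M^*\cap\{n\geq N\}\bigr)$ coincide (by Theorem~\ref{thm75}, for $N\geq\theta(C)$) with the paper's $M_k={\rm L}_C(X)\cup\{k,\to\}$, and the distinctness argument via $d\nmid N$ is the same observation the paper makes through $\gcd({\rm L}_C(X))\neq 1$. Your closing remark that the statement tacitly needs ${\rm NI}(C,X)\neq\emptyset$ (equivalently $X\subseteq\{0,\theta(C),\to\}$) is a fair caveat that the paper leaves implicit.
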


\begin{proof}
(Necessity.) By applying Proposition~\ref{prop28}, if $\gcd(C \cup X)=d\not=1$, then we have that $\gcd({\rm L}_C(X))\not=1$. On the other hand, let us denote by $M_k={\rm L}_C(X) \cup \{k,\to \}$, for all $k \in {\mathbb N}$ such that $k \geq \theta(C)$. Since $M_k \in {\rm NI}(C,X)$, for all $k \in {\mathbb N}$ such that $k \geq \theta(C)$, then we conclude that ${\rm NI}(C,X)$ is infinite.

(Sufficiency.) From Lemma~\ref{lem72}, we know that, if $\gcd(C \cup X)=1$, then ${\rm L}_C(X) \in {\rm NI}(C,X)$. Since ${\rm L}_C(X)$ is contained in all elements of ${\rm NI}(C,X)$ and ${\mathbb N} \setminus {\rm L}_C(X)$ is finite, we easily deduce that ${\rm NI}(C,X)$ is finite.
\end{proof}





\begin{thebibliography}{00}

\bibitem{patterns} M. Bras-Amor\'os and P.~A. Garc\'{\i}a-S\'anchez, Patterns on numerical semigroups, \emph{Linear Algebra Appl.} {\bf 414} (2006), 652--669.

\bibitem{bagsvo} M. Bras-Amor\'os, P.~A. Garc\'{\i}a-S\'anchez, and A. Vico-Oton, Nonhomogeneous patterns on numerical semigroups, \emph{Int. J. Algebra Comput.} {\bf 23} (2013), 1469--1483.

\bibitem{configuration} M. Bras-Amor\'os, K. Stokes, The semigroup of combinatorial configurations, \emph{Semigroup Forum} {\bf 84} (2012), 91--96.

\bibitem{numericalsgps} M.~Delgado, P.~A. Garc{\'i}a-S\'{a}nchez, and J. Morais, \emph{``numericalsgps'': a {\sf GAP} package on numerical semigroups}. \\ \verb+(http://www.gap-system.org/Packages/numericalsgps.html)+.

\bibitem{systems} M. Delgado, P.~A. Garc\'{\i}a-S\'anchez, J.~C. Rosales, J.~M. Urbano-Blanco,  Systems of proportionally modular Diophantine inequalities, \emph{Semigroup Forum} {\bf 76} (2008), 469–-488. 

\bibitem{alfonsin} J.~L. Ram\'{\i}rez Alfons\'{\i}n, \emph{The Diophantine Frobenius problem} (Oxford Univ. Press, 2005).
 
\bibitem{frases} A.~M. Robles-P\'erez, J.~C. Rosales, The numerical semigroup of phrases' lengths in a simple alphabet, \emph{The Scientific World Journal} {\bf 2013} (2013), Article ID 459024, 9 pages.

\bibitem{acotados} A.~M. Robles-P\'erez, J.~C. Rosales, The numerical semigroup of the integers which are bounded by a submonoid of ${\mathbb N}^2$, \emph{Electron. Notes Discrete Math.} {\bf 46} (2014), 249–-256.

\bibitem{pseudovar} A.~M. Robles-P\'erez, J.~C. Rosales, Frobenius pseudo-varieties in numerical semigroups, \emph{Ann. Mat. Pura Appl.} {\bf 194} (2015), 275–-287. 

\bibitem{benefits} A.~M. Robles-P\'erez, J.~C. Rosales, Numerical semigroups in a problem about cost-effective transport. Preprint.

\bibitem{variedades} J.~C. Rosales, Families of numerical semigroups closed under finite intersections and for the Frobenius number, \emph{Houston J. Math.} {\bf 34} (2008), 339--348.

\bibitem{digitales} J.~C. Rosales, M.~B. Branco, and D. Torr\~ao, Sets of positive integers closed under product and the number of decimal digits, \emph{J. Number Theory} {\bf 147} (2015), 1–-13.

\bibitem{brazaletes} J.~C. Rosales, M.~B. Branco, and D. Torr\~ao, Bracelet monoids and numerical semigroups. Preprint.

\bibitem{springer} J.~C. Rosales and P.~A. Garc\'{\i}a-S\'anchez, \emph{Numerical semigroups} (Developments in Mathematics, vol. {\bf 20}, Springer, New York, 2009).

\bibitem{med} J.~C. Rosales, P.~A. Garc\'{\i}a-S\'anchez, J.~I.  Garc\'{\i}a-Garc\'{\i}a, and M.~B. Branco, Numerical semigroups with maximal embedding dimension, \emph{Int. J. Commutative Rings} {\bf 2} (2003), 47–-53. 

\bibitem{arf} J.~C. Rosales, P.~A. Garc\'{\i}a-S\'anchez, J.~I.  Garc\'{\i}a-Garc\'{\i}a, and M.~B. Branco, Arf numerical semigroups, \emph{J. Algebra} {\bf 276} (2004), 3–-12.

\bibitem{saturated} J.~C. Rosales, P.~A. Garc\'{\i}a-S\'anchez, J.~I.  Garc\'{\i}a-Garc\'{\i}a, and M.~B. Branco, Saturated numerical semigroups, \emph{Houston J. Math.} {\bf 30} (2004), 321–-330. 

\bibitem{stokes-bras} K. Stokes, M. Bras-Amor\'os, Linear, non-homogeneous, symmetric patterns and prime power generators in numerical semigroups associated to combinatorial configurations, \emph{Semigroup Forum} {\bf 88} (2014), 11--20. 

\end{thebibliography}
\end{document}